\newtheorem{theorem}{Theorem}[section]
\newtheorem{proposition}{Proposition}[section]
\newtheorem{lemma}[theorem]{Lemma}
\theoremstyle{definition}
\theoremstyle{remark}
\numberwithin{equation}{section}
\newcommand{\dde}[2]{{\dfrac {\partial {#1}}{\partial {#2}}}}
\newcommand{\dt}{\dfrac{d~}{dt}}
\newcommand{\R}{\mathbb R}
\newcommand{\Z}{\mathbb Z}
\newcommand{\imm}{\mathrm{i}}              
\newcommand{\e}{\mathrm{e}}                
\newcommand{\de}{\mathop{}\!\mathrm{d}}  
\newcommand{\pa}{\mathop{}\!\partial}
\newcommand{\fcr}{\mathcal X}
\newcommand{\Cal}[1]{{\mathcal {#1}}}
\begin{document}

\title[On the complete phase synchronization]{On the complete phase synchronization for the Kuramoto model
in the mean-field limit}


\author{D. Benedetto}
\email{benedetto@mat.uniroma1.it}

\author{E. Caglioti}
\email{caglioti@mat.uniroma1.it}

\author{U. Montemagno}
\email{montemagno@mat.uniroma1.it}
\address{Dipartimento di Matematica, {\it Sapienza} Universit\`a di Roma - 
P.zle A. Moro, 5, 00185 Roma, Italy}

\subjclass{92D25, 74A25, 76N10}

\keywords{Kuramoto model, complete synchronization, coupled oscillators}

\date{\today}

\dedicatory{}

\begin{abstract}
We study the Kuramoto model for coupled oscillators.  
For the case of identical natural frequencies, we  give a new
proof of the complete frequency syncronization for all initial data;
extending this result to the continuous version of the model, 
we manage to prove the complete phase
synchronization for any non-atomic measure-valued initial datum,
We also discuss the relation between the boundedness of the entropy 
and the convergence to an incoherent state, for the case of non
identical natural frequencies.
\end{abstract}

\maketitle
\noindent

\section{Introduction}
\label{sez:introduzione}

The Kuramoto model is a mean-field model of coupled oscillators,
which exhibits spontaneous synchronization in a certain 
range of parameters (see \cite{kuramoto},\cite{strogatz2000}).
The equations for the phases of the oscillators 
are
\begin{equation}\label{kuram}
 \dot{\vartheta}_i(t)=\omega_i-\frac{K}{N}\sum_{j=1}^{N}\sin({\vartheta}_i(t)-{\vartheta}_j(t)),\quad i=1,\dots N,
\end{equation}
where the phases $\vartheta_i$ 
can be considered in the one-dimensional torus $\Cal T$, i.e.
defined mod $2\pi$.
The parameters 
$\omega_i$ are the 'natural frequencies' of the  oscillators and
$K>0$ is the coupling intensity.
It can be useful to
represent the system \eqref{kuram} in the unitary circle
in the complex plane
by considering $N$ particles with position $\e^{\imm \vartheta_i(t)}$.
The center of mass  is in the point
\begin{equation}\label{deford}
 R(t)e^{\imm\varphi(t)}=\frac{1}{N}\sum_{j=1}^Ne^{\imm\vartheta_j(t)},
\end{equation}
where $0 \le R(t) \le 1$ and $\varphi(t)$ is well defined only if
$R(t)>0$. Using this definition the system 
\eqref{kuram} can be rewritten as
\begin{equation}\label{kuram1}
 \dot{\vartheta}_i(t)=\omega_i-
KR(t)\sin(\vartheta_i(t)-\varphi(t)),\quad i=1,\dots, N
\end{equation}
as follows from easy calculations.
The interaction term here becomes an attraction term 
towards the center of mass, 
and the intensity of the attraction is moduled by $R(t)$ which grows when the particles get closer. 

As shown in
\cite{strogatz2000}, \cite{bonilla},  \cite{HHK}, 
for large values of $K$ this system exhibits {\it complete
frequency synchronization}, 
i.e. for all $i$ and $j$
$$\vartheta_i-\vartheta_j \to \text{const.}, \ \ R \to \text{const. in }(0,1]$$
as $t\to +\infty$
and all the phases asymptotically rotate with the mean frequency
$$\omega = \frac 1N \sum_{i=1}^N \omega_i.$$
In the case of {\it identical oscillators}, i.e. if 
$\omega_i = \omega$ for all $i$, it is  possible
the {\it complete phase synchronization},
i.e. that $\vartheta_i-\vartheta_j\to 0$ and $R\to 1$.

For $K=0$, 
eq.s \eqref{kuram1} describe a free motion on the $N-$dimensional
torus ({\it incoherent state}). 
For intermediate values the asymptotic behaviour is
more complex: some 
oscillators can synchronize, while others move following 
their natural frequencies.
The asymptotic behaviour of $R$ is strictly related 
to the synchronization, so it is the ``order parameter''
for this phenomenon.

The complete synchronization has been studied with various
methods (see \cite{HHK} and \cite{dong} and references therein).
 In \cite{HHK} the authors consider also 
the case of identical oscillators: they prove the exponential 
convergence to a complete
phase synchronized state  
for initial data supported in an arc of $\Cal T$  
with length less then $\pi$.
This bound is optimal: $\vartheta_1(t) \equiv 0$ and 
$\vartheta_2(t) \equiv \pi$ is a stationary solution 
of \eqref{kuram} 
if $\omega_1 = \omega_2 = 0$.
In \cite{dong} the authors prove the complete 
frequency synchronization of identical oscillators for any initial datum.

In this work, 
in section \ref{sez:discreto}, we preliminary prove
 the complete 
frequency synchronization of identical oscillators with a different 
method with respect to \cite{dong},  
and we also analyze the case in which we obtain 
complete phase synchronization, showing that it is, in effect, 
the ``typical'' behaviour of the system of identical 
oscillators (see Theorem \ref{km1}).

Our method works also for the model obtained in the limit 
of infinitely many identical oscillators, in which 
the unknown is a measure $\rho(t,\vartheta)$ on $\Cal T$:
in section \ref{sez:continuo} we prove the complete 
frequency synchronization for any initial datum $\rho_0$,
and the complete phase synchronization if $\rho_0$ is non-atomic, 
i.e. if it gives zero measure to the points (see Theorem \ref{kkt}).
In this sense we extend a results of \cite{carrillo},
in which the authors prove the complete phase synchronization
if $\rho_0$ has support in a half circle.

In section \ref{sez:kneq} we analyze the 
case of non-identical oscillators, with the partial results of
Proposition \ref{convnoid}. Finally, we discuss the relation 
between the boundedness of the entropy and the convergence to 
an incoherent state.

\section{$N$ identical oscillators}
\label{sez:discreto}

Without loss of generality, 
we can choose $\omega_i=\omega = 0$ for all $i$, because we can subtract
$\omega t$ to the phases. Moreover, scaling the time, we can set $K=1$.
The system now reads as
\begin{equation}\label{kuramid}
 \dot{\vartheta}_i(t)=-\frac1{N}
\sum_{j=1}^{N}\sin({\vartheta}_i(t)-{\vartheta}_j(t)) = 
-R(t) \sin (\vartheta_i(t) - \varphi(t))
\end{equation}
where $R$ e $\varphi$ are defined in \eqref{deford}.
Eq.s \eqref{kuram} are a gradient system, namely
\begin{equation}
\label{gradiente}
\dot \vartheta_i = \dde U{\vartheta_i},
\ \ \text{ where }\ \ U(\vartheta_1,\dots \vartheta_N) = 
\frac 1{2N}\sum_{h,j=1}^N\cos (\vartheta_h - \vartheta_j)
\end{equation}
Note that $U$ is a function of $R$:
$$U = \frac N2 R^2$$
as follows from the following identities obtained by \eqref{deford}:
\begin{equation}
\label{cossin}
R = \frac 1N \sum_{i=1}^N \cos (\vartheta_i - \phi)\ \ \text{ and } \ \ 
\frac 1N \sum_{i=1}^N \sin (\vartheta_i - \phi)= 0.
\end{equation}
The system is invariant under translation, and the mean phase is conserved,
as follows by direct computation:
\begin{equation}
\label{traslazioni}
\frac{1}{N}\sum_{j=1}^N\vartheta_j(t)=\frac{1}{N}\sum_{j=1}^N\vartheta_j(0)
\end{equation}
Without loss of generality we assume the r.h.s. to be zero.

\vskip.3cm
It is simple to find the stationary solutions
of the system, remembering that we are in the framework
of zero mean frequency.
\begin{proposition}\label{stationary}
$\{\vartheta_i^*\}_{i=1}^N$ is a stationary solution of
\eqref{kuramid} iff one of the following properties holds
\begin{itemize}
\item[(1)] $R\equiv 0$ 
\item[(2)] $\left\{\vartheta_i^*\right\}_{i=1}^N$ is of type $(N-k,k)$, 
  that is there exists $\varphi^*$ such that:\\
  $\vartheta_i^* = \varphi^*$ mod $2\pi$, for  $i \in I$,\\ 
  $\vartheta_i^*=\varphi^*+ \pi$ mod $2\pi$, for $i \in I^c$\\
  where $I\subseteq \{1,\dots , N\}$ is a subset of indices with $|I|=k>N/2$.
\end{itemize}
\end{proposition}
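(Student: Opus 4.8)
\emph{Proof plan.} The plan is to reduce the statement to the elementary fixed‑point equation coming from the form \eqref{kuramid}. Since \eqref{kuramid} is an autonomous (gradient) system, $\{\vartheta_i^*\}_{i=1}^N$ is stationary exactly when $\dot\vartheta_i=0$ for every $i$, that is
\begin{equation*}
 R\,\sin(\vartheta_i^* - \varphi) = 0, \qquad i = 1,\dots,N,
\end{equation*}
where $R$ and $\varphi$ are the modulus and the phase of the center of mass \eqref{deford} evaluated at $\{\vartheta_i^*\}$. I would then split the discussion according to the value of $R$.

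If $R=0$ all these identities hold automatically, which is case (1) (equivalently, a configuration with $R=0$ is a critical point of $U=\tfrac N2 R^2$, hence stationary). So the content lies in the case $R>0$. There $\varphi$ is well defined, and the equations force $\sin(\vartheta_i^*-\varphi)=0$, i.e. $\vartheta_i^*\equiv\varphi$ or $\vartheta_i^*\equiv\varphi+\pi$ (mod $2\pi$) for each $i$. I would then put $I:=\{\, i : \vartheta_i^* \equiv \varphi \,\}$, $k:=|I|$, and substitute into \eqref{deford}:
\begin{equation*}
 R\,\e^{\imm\varphi} = \frac1N\Bigl( k\,\e^{\imm\varphi} + (N-k)\,\e^{\imm(\varphi+\pi)} \Bigr) = \frac{2k-N}{N}\,\e^{\imm\varphi},
\end{equation*}
so that $R=(2k-N)/N$. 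The hypothesis $R>0$ then gives precisely $k>N/2$, which is property (2) with $\varphi^*=\varphi$.

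For the converse, in case (2) one checks by the same computation that a configuration of type $(N-k,k)$ with $k>N/2$ has center of mass $\tfrac{2k-N}{N}\e^{\imm\varphi^*}$, hence $R=(2k-N)/N>0$ and phase $\varphi=\varphi^*$; then $\sin(\vartheta_i^*-\varphi)\in\{\sin 0,\sin\pi\}=\{0\}$ and the configuration is stationary; case (1) is trivial. Since $R=0$ and $R>0$ are mutually exclusive, the two alternatives are disjoint on configurations (a balanced ``type'' shape with $k=N/2$ has $R=0$ and falls under (1), not (2)), so there is no overlap to worry about. I do not expect a genuine obstacle here: the only points needing a little care are that, when $R>0$, the splitting of the indices into two antipodal groups and the bound $|I|>N/2$ are \emph{forced} by self‑consistency of $\varphi$ rather than assumed, and the clean treatment of the dichotomy $R=0$ vs.\ $R>0$ (recalling that $\varphi$ is precisely the quantity that is undefined when $R=0$).
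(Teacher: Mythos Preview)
Your argument is correct and is exactly the elementary computation the paper has in mind: the paper does not spell out a proof of this proposition (it is introduced with ``It is simple to find the stationary solutions\dots''), but the identical reasoning---set the right-hand side of \eqref{kuramid} to zero, split into $R=0$ versus $R>0$, and in the latter case read off that each $\vartheta_i^*$ must equal $\varphi$ or $\varphi+\pi$ with the majority at $\varphi$ by self-consistency of \eqref{deford}---is precisely what the authors write out for the continuous analogue (Proposition~\ref{propo:uguali-cin}). Your computation $R=(2k-N)/N$ is consistent with the proposition as stated; the paper's subsequent remark ``$R=1-2k/N$'' and ``if $k=0$, the solution is a complete phase synchronized state'' use the opposite convention for $k$, which is a minor inconsistency in the paper, not in your proof.
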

\noindent
The first case corresponds to an incoherent state:
the center of mass is in the origin and 
$\varphi$ is undefined; these solutions 
form translational invariant submanifolds of the torus 
$\Cal T^N$ of dimension $N-1$.
In the second case $R= 1-2k/N$ and
$\varphi = \varphi^*$. If $k=0$, the solution is a complete phase 
synchronized state, while if $k\ge 1$ the solution is only 
a complete frequency synchronized state.

\vskip.3cm
It is easy to prove that the absolute maximum of the function $U$ 
is achieved by complete synchronization states, i.e. stationary solutions
of the type $(N,0)$, which are the only stable solutions
of the system. Removing the translational invariance by
fixing the mean phase,  all the
critical point of $U$ are isolated but for the minima
which corresponds to $R=0$.
Moreover, it can be proved that
the value of $\vartheta_i$ are bounded in time (see \cite{dong}).
The gradient structure \eqref{gradiente}  
allows the authors  in \cite{dong} to prove the
complete frequency synchronization of the system for any
initial data. As a consequence, it is easy to 
prove that the solutions must converge
to a complete phase synchronized state, up to a 0-measure set of initial data,
which corresponds to the unstable stationary solutions and their
stable manifolds.

This kind of arguments cannot be used
in the limit $N\to +\infty$, so we use a different method,
which is based  on the analysis of the asymptotic behaviour of $R$
and $\varphi$.

\begin{proposition}\label{rmon}
If $\{\vartheta_i(t)\}_{i=1}^N$ is not a stationary solution, then 
\begin{itemize}
\item[(1)] $\dot{R}(t)>0, \quad \forall\, t>0$
\item[(2)] $R(t)\xrightarrow[t \rightarrow \infty]{}R^*\in (0,1]$
\item[(3)] $\varphi(t)$ is well defined $\forall\, t>0$
\end{itemize}
\end{proposition}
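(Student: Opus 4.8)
The plan is to study the time derivative of the order parameter directly from the dynamics. Starting from $U = \frac{N}{2}R^2$ and the gradient structure \eqref{gradiente}, along any solution we have $\dt U = \sum_i \dot\vartheta_i^2 \ge 0$, so $R^2$ is nondecreasing; this already gives monotonicity in the weak sense and, being bounded above by $1$, forces $R(t) \to R^*$ for some $R^* \in [0,1]$. To upgrade to the strict inequality in (1), I would compute $\dot R$ explicitly. Differentiating \eqref{deford} and using \eqref{kuramid} together with the identities \eqref{cossin}, one finds (when $R>0$) an expression of the form
\begin{equation}
\label{rdot-plan}
\dot R = \frac{1}{N}\sum_{j=1}^N R\,\sin^2(\vartheta_j - \varphi) = R\Bigl(\frac1N\sum_j \sin^2(\vartheta_j-\varphi)\Bigr).
\end{equation}
The bracketed quantity is a sum of nonnegative terms, hence $\dot R \ge 0$, and it vanishes exactly when every $\sin(\vartheta_j-\varphi)=0$, i.e. when each $\vartheta_j \in \{\varphi,\varphi+\pi\}$ — which by Proposition \ref{stationary} (case (2), together with the constraint $R>0$ ruling out exact $(N/2,N/2)$ splits) is precisely a stationary configuration. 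So if the data is not stationary at some time, the bracket is positive there; I then need to argue it stays positive for all later times, which follows because the set of non-stationary configurations is forward-invariant (a solution cannot reach a stationary point in finite time, by uniqueness for the ODE \eqref{kuramid} run backward). This yields (1).

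For part (2), monotonicity plus boundedness gives convergence $R(t)\to R^*$; the only thing left is to exclude $R^*=0$. But $R$ is strictly increasing by (1) and $R(0)>0$ is \emph{not} automatic — however, even if $R(0)=0$ initially, $\dot R(0)>0$ (since the bracket in \eqref{rdot-plan} cannot vanish unless all particles sit at two antipodal points, and in the degenerate $R=0$ case one checks directly from \eqref{kuramid} that $\dot R > 0$ unless the configuration is of incoherent type (1) in Proposition \ref{stationary}, which is stationary). So for any non-stationary data, $R(t_0)>0$ for some $t_0>0$, and then $R(t) \ge R(t_0) > 0$ for all $t\ge t_0$ by monotonicity, forcing $R^* \ge R(t_0) > 0$. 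Part (3) is then immediate: once $R(t)>0$ for all $t>0$, the mean phase $\varphi(t)$ is well defined by \eqref{deford}.

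The main obstacle I anticipate is the bookkeeping at $R=0$: the expression \eqref{rdot-plan} and $\varphi$ itself are only defined where $R>0$, so the argument that a non-stationary solution immediately leaves $\{R=0\}$ needs a separate, direct computation from \eqref{kuramid} — one should look at $\frac{d}{dt}\bigl(\frac1N\sum_j e^{\imm\vartheta_j}\bigr)$ at a time when $R=0$ and show this vector is nonzero unless the configuration is incoherent-stationary. The other delicate point is making precise that ``not stationary at time $t>0$'' propagates: this is just backward uniqueness for \eqref{kuramid}, since the right-hand side is smooth (real-analytic) in $\vartheta$, so a trajectory passing through a stationary point would have to be constant. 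Everything else is a routine trigonometric computation using \eqref{cossin}.
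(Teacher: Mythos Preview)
Your approach is essentially the paper's: both derive the identity $\dot R = R\cdot\frac1N\sum_j\sin^2(\vartheta_j-\varphi)$ and read off (1)--(3) from it, with the paper simply writing ``from which the thesis'' where you supply the details. One simplification for the obstacle you flag: any configuration with $R=0$ is automatically of incoherent type (1) in Proposition~\ref{stationary} (since $R=0$ forces every $\dot\vartheta_i=-R\sin(\vartheta_i-\varphi)=0$), hence stationary --- so a non-stationary solution has $R(0)>0$ from the start and the separate $R=0$ analysis is unnecessary.
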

\begin{proof}
Deriving in $t$ eq. \eqref{deford}, after some 
manipulations, we obtain:
\begin{equation}
\label{rdot}
\dot{R}(t)=\left[\frac{1}{N}\sum_{j=1}^N\sin^2(\vartheta_j(t)-\varphi(t))\right]R(t)
\end{equation}
from which the thesis.
\end{proof}
\vskip.3cm

In the sequel we use the following calculus lemma.
\begin{lemma}\label{lemma1}
Let $f$ be a $C^1$ function $f:[0,+\infty)\to \R$, with  $|f'(t)|\le C$.
If the integral $\int_0^\infty f(s)\de s$ exists finite, then
$f(t)\xrightarrow[t \rightarrow \infty]{}0$.
\end{lemma}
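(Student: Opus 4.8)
The plan is to argue by contradiction using the Cauchy criterion for the convergent improper integral together with the uniform Lipschitz bound $|f'|\le C$.

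First I would suppose that $f(t)$ does not tend to $0$ as $t\to\infty$. Then there is some $\eps_0>0$ and a sequence $t_n\to+\infty$ with $|f(t_n)|\ge\eps_0$ for all $n$. Passing to a subsequence (still denoted $t_n$) we may assume that all the values $f(t_n)$ have the same sign, say $f(t_n)\ge\eps_0$; the case $f(t_n)\le-\eps_0$ is identical up to a global sign. By further thinning the sequence we may also assume $t_{n+1}>t_n+1$, so that the intervals introduced below are pairwise disjoint.

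The key step is the following elementary consequence of the derivative bound: for $s\in[t_n,t_n+\eps_0/(2C)]$ we have, by the mean value theorem, $f(s)\ge f(t_n)-C\,(s-t_n)\ge \eps_0-\eps_0/2=\eps_0/2$. Hence, setting $\delta_0:=\min\{1,\eps_0/(2C)\}$,
\begin{equation}
\int_{t_n}^{t_n+\delta_0} f(s)\,\de s\ \ge\ \frac{\eps_0}{2}\,\delta_0\ =:\ \kappa\ >\ 0,
\end{equation}
for every $n$, with $\kappa$ independent of $n$. On the other hand, since $\int_0^{\infty}f(s)\,\de s$ exists and is finite, the function $T\mapsto\int_0^T f(s)\,\de s$ satisfies the Cauchy criterion at infinity: for the above $\kappa$ there is $T_0$ such that $\bigl|\int_a^b f(s)\,\de s\bigr|<\kappa$ whenever $b>a\ge T_0$. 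Choosing $n$ large enough that $t_n\ge T_0$ and applying this with $a=t_n$, $b=t_n+\delta_0$ contradicts the displayed lower bound. Therefore $f(t)\to 0$.

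I do not expect any real obstacle here; the only point requiring a little care is the bookkeeping that makes the auxiliary intervals disjoint and keeps the lower bound $\kappa$ uniform in $n$, but this is handled simply by thinning the sequence $\{t_n\}$ and taking $\delta_0\le 1$.
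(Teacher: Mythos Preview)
Your argument is correct. The paper does not supply a proof of this lemma at all; it is simply stated as a known calculus fact, so there is nothing to compare against.

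One small remark: the step where you thin the sequence so that $t_{n+1}>t_n+1$ and the intervals $[t_n,t_n+\delta_0]$ are pairwise disjoint is unnecessary. Your contradiction comes from the Cauchy criterion applied to a \emph{single} interval $[t_n,t_n+\delta_0]$ with $t_n\ge T_0$, and disjointness plays no role there. The proof goes through verbatim without that refinement.
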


Now we have all the ingredients to prove the main result 
of this section.
\begin{theorem}\label{km1}
If $\vartheta_i(t), \,\, i=1,\dots, N$ is not a stationary solution,  
then it converges to a completely 
frequency synchronized state of type $(N-k,k)$.

Moreover, 
if  $\vartheta_i(0)\not=\vartheta_j(0)\mod 2\pi \,\,\text{when}\,\, i\not=j$, 
the solution converges to a stationary solution of type 
$(N,0)$ or $(N-1,1)$.
\end{theorem}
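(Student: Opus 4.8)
The plan is to combine Proposition~\ref{rmon} with the calculus Lemma~\ref{lemma1} to force $\sin(\vartheta_j-\varphi)\to0$ for every $j$, and then to upgrade this to honest convergence of the individual phases by exploiting the conservation of the mean phase. For the first assertion I start from \eqref{rdot}, writing $\dot R=g R$ with $g(t):=\frac1N\sum_j\sin^2(\vartheta_j(t)-\varphi(t))$. Since the solution is not stationary, by Proposition~\ref{rmon} $R(t)$ increases to $R^*\in(0,1]$ and $\varphi$ is well defined on $(0,+\infty)$; fixing $t_1>0$ one then has $R(t)\ge R(t_1)>0$, hence $\int_{t_1}^{+\infty}g=\int_{t_1}^{+\infty}\dot R/R\le(R^*-R(t_1))/R(t_1)<+\infty$. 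As $\dot\vartheta_j=-R\sin(\vartheta_j-\varphi)$ and $\dot\varphi=-\frac1{2N}\sum_l\sin(2(\vartheta_l-\varphi))$ are both bounded, $g$ is $C^1$ with bounded derivative, so Lemma~\ref{lemma1} gives $g(t)\to0$, i.e. $\sum_j\sin^2(\vartheta_j-\varphi)\to0$, and therefore $\sin(\vartheta_j(t)-\varphi(t))\to0$ for each $j$.

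Next I fix $\delta\in(0,\pi/2)$ and $T_0$ with $|\sin(\vartheta_j-\varphi)|<\sin\delta$ for all $j$ and $t\ge T_0$: then $t\mapsto(\vartheta_j(t)-\varphi(t))\bmod 2\pi$ is a continuous curve in $\Cal T$ trapped in the disconnected set $(-\delta,\delta)\cup(\pi-\delta,\pi+\delta)$, hence eventually confined to one component. This splits the indices into $A$ (values near $0$) and $B$ (values near $\pi$); letting $\psi_j$ be the continuous representative in $(-\delta,\delta)$ of $\vartheta_j-\varphi$ on $A$ and of $\vartheta_j-\varphi-\pi$ on $B$, from $\sin\psi_j\to0$ and $|\psi_j|<\delta$ I get $\psi_j\to0$. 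It remains to prove that $\varphi$ itself converges, which I regard as the delicate step. Using $Re^{\imm\varphi}=\frac1N\sum_je^{\imm\vartheta_j}$ together with $e^{\imm\vartheta_j}-e^{\imm\varphi}\to0$ on $A$ and $e^{\imm\vartheta_j}+e^{\imm\varphi}\to0$ on $B$, one finds $R(t)\to(|A|-|B|)/N$, so positivity of $R^*$ forces $|A|>N/2$. Writing $\vartheta_j=\varphi+\psi_j+2\pi m_j$ on $A$ and $\vartheta_j=\varphi+\pi+\psi_j+2\pi m_j$ on $B$ with integers $m_j$ (constant for $t\ge T_0$), the conserved relation $\sum_j\vartheta_j\equiv0$ becomes $N\varphi+\sum_j\psi_j+|B|\pi+2\pi\sum_jm_j=0$, so $\varphi(t)\to\varphi^*:=-\frac{|B|\pi}{N}-\frac{2\pi}{N}\sum_jm_j$. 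Hence $\vartheta_j\to\varphi^*$ on $A$ and $\vartheta_j\to\varphi^*+\pi$ on $B$, a stationary solution of type $(N-k,k)$ with $k=|B|<N/2$.

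For the second assertion I first note that distinct initial phases remain distinct: for $i\ne j$ the difference $x_{ij}:=\vartheta_i-\vartheta_j$ solves the scalar equation $\dot x_{ij}=-\frac2N\bigl(\sum_l\cos(\tfrac{\vartheta_i+\vartheta_j}{2}-\vartheta_l)\bigr)\sin(x_{ij}/2)$, whose coefficient is continuous and bounded and which admits the constants $x\equiv2\pi n$ ($n\in\Z$) as solutions; by uniqueness, $x_{ij}(0)\ne0\bmod2\pi$ implies $x_{ij}(t)\ne0\bmod2\pi$ for all $t$. Now suppose $|B|\ge2$ and pick $i\ne j$ in $B$. By the first part $\vartheta_i(t),\vartheta_j(t)\to\varphi^*+\pi$, so $x_{ij}(t)\to0$, while $\tfrac{\vartheta_i+\vartheta_j}{2}-\vartheta_l\to\pi$ for $l\in A$ and $\to0$ for $l\in B$, so the coefficient tends to $-\frac2N(-|A|+|B|)=2R^*>0$. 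Consequently, for $t$ large, $\frac{d}{dt}|x_{ij}|\ge c\,|x_{ij}|$ for some $c>0$, so $|x_{ij}(t)|$ grows exponentially once it is small (it is positive, being $\ne0\bmod2\pi$), contradicting $x_{ij}(t)\to0$. Therefore $|B|\le1$, i.e. the solution converges to a state of type $(N,0)$ or $(N-1,1)$.

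The genuine obstacle is the middle step: passing from $\sin(\vartheta_j-\varphi)\to0$ — which only says the oscillators asymptotically cluster near $\varphi$ and near $\varphi+\pi$ — to actual convergence of the $\vartheta_j$, where the translation invariance (conservation of $\sum_j\vartheta_j$) is exactly what pins down the limit; this is also the ingredient I would expect to carry over to the continuum model in Section~\ref{sez:continuo}. The rest — the bounds that make Lemma~\ref{lemma1} applicable and the sign bookkeeping showing the antipodal cluster is repelling — is routine but worth doing carefully.
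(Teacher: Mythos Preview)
Your proof is correct and follows essentially the same strategy as the paper: use Proposition~\ref{rmon} and Lemma~\ref{lemma1} on \eqref{rdot} to force $\sin(\vartheta_j-\varphi)\to0$, then invoke the conservation law \eqref{traslazioni} to pin down $\varphi^*$ and hence each $\vartheta_j$, and finally derive an exponential repulsion estimate near the antipodal cluster to rule out $k\ge2$. The only cosmetic difference is in the second part, where the paper works directly with $\xi_h=\vartheta_h-\varphi-(2k_h+1)\pi$ (your $\psi_h$) and the relation $\dot\xi_h=R\sin\xi_h-\dot\varphi$, which makes the inequality $\frac{d}{dt}(\xi_i-\xi_j)^2\ge C(\xi_i-\xi_j)^2$ immediate and sidesteps the lift ambiguity implicit in your separate evaluation of $\sin(x_{ij}/2)$ and $\sum_l\cos\bigl(\tfrac{\vartheta_i+\vartheta_j}{2}-\vartheta_l\bigr)$; your computation is nonetheless valid once one fixes consistent real lifts for the chosen $i,j\in B$.
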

\proof
Since $R(t)\to R^*$, as stated in Proposition \ref{rmon},  
$\dot R$ verifies the hypothesis of Lemma 
$\ref{lemma1}$, then, using eq. \eqref{rdot}
\begin{equation}
 \frac{1}{N}\sum_{j=1}^N\sin^2(\vartheta_j(t)-\varphi(t))\to 0
\end{equation}
therefore
\begin{equation}
 \sin(\vartheta_j(t)-\varphi(t))\to 0,\quad  j=1, \dots N.
\end{equation}
Since $\sin x$ has isolated zeros
\begin{equation}
 \vartheta_j(t)-\varphi(t)\to k_j\pi
\end{equation}
for some $k_j\in \Z$.
Using that, from the assumption \eqref{traslazioni}, the mean phase  is zero 
\begin{equation}
 \varphi(t)=-\frac{1}{N}\sum_{j=1}^N\left[\vartheta_j(t)- \varphi(t)\right]
\to -\frac{1}{N}\sum_{j=1}^Nk_j\pi=:\varphi^*.
\end{equation}
\noindent
Finally, $\vartheta_j(t)$ converges $\forall \, j=1,\dots, N$
\begin{equation}
 \vartheta_j(t)=\vartheta_j(t)-\varphi(t)+\varphi(t)\to k_j\pi+\varphi^*.
\end{equation}

In order to prove the second part of the theorem, we assume $k\ge 2$. 
Then there exist $i,j$ such that
\begin{equation}
 \vartheta_i(t)\to \varphi^*+(2k_i+1)\pi, \quad \vartheta_j(t)\to \varphi^*+(2k_j+1)\pi, \quad k_i, k_j \in \mathbb{Z}
\end{equation}
We can write
\begin{equation}\label{van}
 \vartheta_h(t)-\varphi(t)=\xi_h(t)+(2k_h+1)\pi, \quad h=i,j
\end{equation}
where  $\xi_h\to 0$ and
$\dot{\xi}_h(t)=R(t)\sin(\xi_h(t))-\dot{\varphi}(t)$, as follows
 from \eqref{kuramid}. Then, 
using that $x\sin(x/2)\geq x^2/\pi$ when $x\in[-\pi,\pi]$
and that $\xi_i$, $\xi_j$ go to zero:
\begin{align}\label{expl}
 \dt [\xi_i(t)-\xi_j(t)]^2&=2(\xi_i(t)-\xi_j(t))[R(t)\sin(\xi_i(t))-R(t)\sin(\xi_j(t))]=\\ \nonumber
 &=4(\xi_i(t)-\xi_j(t))R(t)\sin\left(\frac{\xi_i(t)-\xi_j(t)}{2}\right)\cos\left(\frac{\xi_i(t)+\xi_j(t)}{2}\right)\geq\\  \nonumber
 &\geq C(\xi_i(t)-\xi_j(t))^2,
\end{align}
\noindent
which contradicts the fact that $\xi_i,\,\xi_j\to 0$.
\endproof
\noindent
It is not possible to exclude that the limit 
point is a stationary solution of type $(N-1,1)$,
in fact its stable manifold is clearly non-empty,
as can be easily verified. 
For istance, consider the case of three oscillators
with $\vartheta_1(t)=-\vartheta_2(t) = \delta(t)$, 
$\vartheta_{3}(t)\equiv\pi$,
where $\delta(t)$ satisfies the equation
$$\dot \delta = \frac {2}{3} \sin \delta \left(
\frac 1{2} - \cos \delta\right)$$
The asymptotic behaviour depends on the initial datum 
$\delta(0)=\delta_0$:

$$\lim_{t\to + \infty}
\delta(t) = \left\{ \begin{aligned}
0 & \text{ if } \delta_0 \in [0,\pi/3)\\
\pi/3 & \text{ if }\delta_0 = \pi/3\\
\pi & \text{ if }\delta_0 \in (\pi/3,\pi]
\end{aligned}\right.
$$
In the first case the solution tends to a stationary solution of 
type $(2,1)$, which is a complete frequency synchronized state;
in the second case, the system is in an incoherent state; 
in the last case we have complete phase synchronization.

\section{The kinetic model for identical oscillators}
\label{sez:continuo}

We now consider the dynamics induced by \eqref{kuramid},
in the limit $N\to +\infty$
for a density of phases $\rho(t,\vartheta)$ defined on $\Cal T$
(see \cite{carrillo})
The equation for $\rho$ is a conservation law
of current $v$ depending non locally on $\rho$:
\begin{equation}\label{kkuram}
\begin{cases}\partial_t \rho(t,\vartheta)
+\pa_\vartheta(v(t,\vartheta)\rho(t,\vartheta))=0\\
v(t,\vartheta) = - \int_{\Cal T}  \sin(\vartheta -
\vartheta') \rho(t,\vartheta') \de \vartheta'\\
\end{cases}
\end{equation}
This equation has a weak form for which
existence and uniqueness results for measure valued solution 
 have been proved in \cite{lancellotti} (see also \cite{carrillo}):
\begin{equation}
\label{kuraidweak}
\left\{\begin{aligned}
&\dot{\Theta}(t,\vartheta)=-R(t) \sin(\Theta(t,\vartheta) - \varphi(t)),\ \ 
\text{with }{\Theta}(0,\vartheta) = \vartheta\\[3pt]
&R(t)e^{\imm\varphi(t)}=\int_{\Cal T} e^{\imm\vartheta}\rho(t,\vartheta)d\vartheta
\\[3pt]
&\int_{\Cal T}
h(\vartheta) \rho(t,\vartheta)  \de \vartheta = 
\int_{\Cal T} 
h(\Theta(t,\vartheta)) \rho_0(\vartheta)  \de \vartheta  
\end{aligned}\right.
\end{equation}
where the measure $\rho_0(\vartheta)$ is the initial datum
and $h$ is any regular $2\pi$-periodic observable.

The order parameters verify the identities
\begin{equation}
\label{rphicont}
R(t) = \int_{\Cal T} \cos(\eta - \varphi(t)) \rho(t,\eta) \de \eta, \ \ 
\text{ and } \int_{\Cal T} \sin(\eta - \varphi(t)) \rho(t,\eta) \de \eta =0,
\end{equation} 
and their time derivatives  are
\begin{equation}
\label{orderpar}
\begin{aligned}
\dot{R}(t)&=R(t)\int_{\Cal T}\sin^2(\eta-\varphi(t))\rho(t,\eta)d\eta \\
\dot{\varphi}(t)&=-\int_{\Cal T}\sin(\eta-\varphi(t))\cos(\eta-
\varphi(t))\rho(t,\eta)d\eta \end{aligned}
\end{equation}
Also in this case the mean phase is constant in time
\begin{align}
\int_{[-\pi,\pi)} \Theta(t,\vartheta) \rho_0(\vartheta)d\vartheta=
\int_{[-\pi,\pi)}  \vartheta \rho_0(\vartheta)d\vartheta,
\end{align}
because its  time derivative is zero as follows from 
\eqref{rphicont}.
Note that $\vartheta$ is not an observable 
on $\Cal T$, so $\int_{[-\pi,\pi)} \vartheta \rho(t,\vartheta)d\vartheta$ 
is different from 
$\int_{[-\pi,\pi)}  \vartheta \rho_0(\vartheta)d\vartheta$.

The stationary solutions of \eqref{kkuram} are a generalization 
of the ones relative to the discrete system.
\begin{proposition}
\label{propo:uguali-cin}
$\rho(t,\vartheta)\equiv \rho^*(\vartheta)$ is a stationary solution
of \eqref{kkuram} iff it verifies one of the following identities:
\begin{itemize}
\item[(1)] $R=0$ 
\item[(2)] $\rho^*(\vartheta)$ is of type $(c_1,c_2)$, that is
$\rho^*(\vartheta)
= c_1\delta(\vartheta - \varphi^*)+c_2
\delta(\vartheta - \varphi^*-\pi)$, where $c_1>c_2\geq 0$
and $c_1+c_2=1$.
\end{itemize}
\end{proposition}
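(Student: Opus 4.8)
The plan is to mirror the proof of Proposition~\ref{stationary} for the discrete system, replacing the finite sums by the order-parameter identities \eqref{rphicont}--\eqref{orderpar}.

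\textbf{Sufficiency.} First I would check that the two families are stationary. If $R=0$, then, as in the computation behind \eqref{kuraidweak}, the current is $v(t,\vartheta)=-R(t)\sin(\vartheta-\varphi(t))\equiv 0$, so $\partial_t\rho=0$. If $\rho^*=c_1\delta(\vartheta-\varphi^*)+c_2\delta(\vartheta-\varphi^*-\pi)$ with $c_1+c_2=1$, I would insert the ansatz $\rho(t)\equiv\rho^*$ into \eqref{kuraidweak}: then $R(t)e^{\imm\varphi(t)}=c_1e^{\imm\varphi^*}+c_2e^{\imm(\varphi^*+\pi)}=(c_1-c_2)e^{\imm\varphi^*}$ is a fixed complex number, hence $R$ and $\varphi$ are constant and the characteristic equation $\dot\Theta=-R\sin(\Theta-\varphi^*)$ is autonomous with equilibria exactly $\varphi^*$ and $\varphi^*+\pi$. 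Thus $\Theta(t,\varphi^*)=\varphi^*$ and $\Theta(t,\varphi^*+\pi)=\varphi^*+\pi$ for all $t$, so the push-forward of $\rho^*$ stays $\rho^*$; by the uniqueness result of \cite{lancellotti} this ansatz is the solution, which is therefore stationary.

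\textbf{Necessity.} Conversely, assume $\rho(t)\equiv\rho^*$ is stationary and $R\neq 0$, i.e. $R\equiv R^*>0$. Since $R(t)e^{\imm\varphi(t)}=\int_{\Cal T}e^{\imm\vartheta}\rho^*(\de\vartheta)$ is a fixed nonzero complex number, both $R$ and $\varphi$ are constant, say $\varphi\equiv\varphi^*$. Now I would use the first identity in \eqref{orderpar}: stationarity gives $\dot R\equiv 0$, so
\begin{equation*}
0=\dot R=R^*\int_{\Cal T}\sin^2(\eta-\varphi^*)\,\rho^*(\de\eta),
\end{equation*}
and since $R^*>0$ and the integrand is nonnegative, $\rho^*$ is concentrated on $\{\eta:\sin(\eta-\varphi^*)=0\}=\{\varphi^*,\varphi^*+\pi\}$. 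Hence $\rho^*=c_1\delta(\vartheta-\varphi^*)+c_2\delta(\vartheta-\varphi^*-\pi)$ with $c_1,c_2\ge0$ and $c_1+c_2=1$; evaluating $R^*e^{\imm\varphi^*}=(c_1-c_2)e^{\imm\varphi^*}$ forces $c_1-c_2=R^*>0$, so after relabelling $\varphi^*$ if necessary $c_1>c_2\ge0$, which is precisely type $(c_1,c_2)$.

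\textbf{Main obstacle.} The algebra is short; the only delicate point is the \emph{if} direction in the atomic case, where $\rho(t)\equiv\rho^*$ must be understood as a measure-valued solution. This is handled by the Lagrangian weak form \eqref{kuraidweak} together with the observation that the two antipodal atoms sit exactly at the equilibria of the characteristic flow, and by invoking existence and uniqueness from \cite{lancellotti} to identify the ansatz with the actual solution.
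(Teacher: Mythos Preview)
Your argument is correct and follows the paper's approach: a stationary $\rho^*$ with $R^*>0$ must be supported on $\{\sin(\vartheta-\varphi^*)=0\}=\{\varphi^*,\varphi^*+\pi\}$. The paper reaches this by noting that the flux $v\rho^*$ must be constant in $\vartheta$ (hence identically zero, since $v=-R^*\sin(\vartheta-\varphi^*)$ vanishes at two points), whereas you obtain the same support condition from $\dot R=0$ via \eqref{orderpar}; you additionally spell out the sufficiency direction and the constraint $c_1>c_2$, which the paper's proof leaves implicit.
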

\proof
Let $\rho^*(\vartheta)$ be a stationary solution and
$R^*$ and $\varphi^*$ be the corresponding order parameters. The current
is 
$$v(t,\theta) = v(\vartheta) = -R^*\sin (\vartheta -\varphi^*)$$
and $v\rho^*$ must be constant, then $R^*=0$ or 
$\rho^*$ is supported where $\sin(\vartheta-\varphi^*)$ is zero.
\endproof

The discrete model is a particular case of the kinetic model
\eqref{kkuram}, but for the second one the proof of the convergence 
is a little more difficult, so we have to adapt 
our argument.
We start proving that there exists the asymptotic value
of $\varphi(t)$.

\begin{proposition}
$\varphi(t)$ converges when $t$ goes to infinity.
\end{proposition}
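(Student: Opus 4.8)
The plan is to mimic the discrete argument of Proposition~\ref{rmon} and Theorem~\ref{km1}, using that $R(t)$ is monotone and bounded. First I would establish that, as in the discrete case, $\dot R(t) \ge 0$ for all $t$: from the first line of \eqref{orderpar}, $\dot R = R \int_{\Cal T} \sin^2(\eta - \varphi)\rho(t,\eta)\,\de\eta \ge 0$. Since $R(t) \in [0,1]$ is nondecreasing, it converges to some $R^* \in [0,1]$. If $R^* = 0$, then $R(t) \equiv 0$ for all $t$ (monotonicity), $\varphi$ is never defined, and the statement is vacuous (or, depending on convention, one restricts to the case $R(t) > 0$); so the interesting case is $R^* > 0$, and then $R(t) \ge R(t_0) > 0$ for $t \ge t_0$ large, so $\varphi(t)$ is well defined and $C^1$ for large $t$.

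Next I would extract decay of the ``spread'' of $\rho$ around the mean phase. Because $R(t)$ converges and $\dot R$ is bounded (the integrand $R\sin^2(\cdot)$ is bounded by $1$ and, one checks, Lipschitz in $t$ via the evolution equation for $\Theta$ and the boundedness of $\dot R, \dot\varphi$), Lemma~\ref{lemma1} applies and gives
\begin{equation}\label{eq:spreaddecay}
\int_{\Cal T} \sin^2(\eta - \varphi(t))\,\rho(t,\eta)\,\de\eta \xrightarrow[t\to\infty]{} 0.
\end{equation}
This says that, asymptotically, $\rho(t,\cdot)$ concentrates (in the sense of this second-moment-type quantity) on the set $\{\varphi(t), \varphi(t)+\pi\}$ where $\sin(\eta - \varphi) = 0$.

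Finally I would control $\dot\varphi$ using \eqref{eq:spreaddecay}. From the second line of \eqref{orderpar}, $\dot\varphi(t) = -\int_{\Cal T} \sin(\eta - \varphi)\cos(\eta - \varphi)\,\rho(t,\eta)\,\de\eta$, so by Cauchy--Schwarz
\begin{equation}\label{eq:phidotbound}
|\dot\varphi(t)| \le \left(\int_{\Cal T} \sin^2(\eta - \varphi(t))\,\rho(t,\eta)\,\de\eta\right)^{1/2}\left(\int_{\Cal T} \cos^2(\eta - \varphi(t))\,\rho(t,\eta)\,\de\eta\right)^{1/2} \le \left(\int_{\Cal T} \sin^2(\eta - \varphi(t))\,\rho(t,\eta)\,\de\eta\right)^{1/2},
\end{equation}
which tends to $0$. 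This alone does not give convergence of $\varphi$; I need integrability. The natural route is to bound $|\dot\varphi|$ by something controlled by $\dot R$. Since $\dot R = R \int \sin^2(\eta-\varphi)\rho$, we get $|\dot\varphi(t)| \le \big(\dot R(t)/R(t)\big)^{1/2} \le R^{*\,-1/2}\dot R(t)^{1/2}$ for large $t$; but $\int_0^\infty \dot R(t)^{1/2}\,\de t$ need not be finite just from $\int_0^\infty \dot R\,\de t < \infty$. The main obstacle is therefore upgrading the $o(1)$ decay of $\dot\varphi$ to integrability (or directly showing $\varphi$ is Cauchy). I expect one closes this gap by a sharper estimate: either showing exponential-type decay of $\int \sin^2(\eta-\varphi)\rho$ once $R$ is close to $1$ (as happens in the discrete phase-synchronized regime), or by a monotonicity/energy argument for $\varphi$ itself, or by arguing that the asymptotic mass splits into two clusters near $\varphi^*$ and $\varphi^*+\pi$ whose individual centers of mass converge, pinning $\varphi^*$ via the conserved mean phase. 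I would pursue the route of first proving \eqref{eq:spreaddecay} decays fast enough — using that $\dot R / R = \int\sin^2(\eta-\varphi)\rho$ and a Grönwall-type lower bound on $\dot R$ coming from $\int\sin^2 \ge$ (const)$\int\sin^4$-type inequalities on the torus — to make $\dot\varphi \in L^1$, hence $\varphi(t)$ Cauchy and convergent.
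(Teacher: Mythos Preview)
Your setup matches the paper exactly through the Cauchy--Schwarz step: $R(t)\nearrow R^*\in(0,1]$, Lemma~\ref{lemma1} gives $\int_{\Cal T}\sin^2(\eta-\varphi)\rho\,\de\eta\to 0$, and hence $|\dot\varphi(t)|\to 0$. You also correctly diagnose the obstacle: $|\dot\varphi|\le(\dot R/R)^{1/2}$ only yields $\dot\varphi\in L^2$, not $L^1$, so convergence of $\varphi$ does not yet follow.

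The gap is that none of your proposed closures work as stated. Exponential decay of $\int\sin^2(\eta-\varphi)\rho$ cannot be expected in general, because $R^*$ need not be close to $1$ (the two-delta states of Proposition~\ref{propo:uguali-cin} have $R^*=c_1-c_2<1$ and are genuine limits). A $\sin^2\ge c\sin^4$ inequality goes the wrong way (it lower-bounds $\sin^2$ by something smaller), and the clustering/mean-phase heuristic presupposes the very convergence you are trying to prove.

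The paper's missing idea is a one-line computation of $\ddot\varphi$. Differentiating the second line of \eqref{orderpar} along the characteristics and using the second identity in \eqref{rphicont} gives
\[
\ddot\varphi(t)=\dot\varphi(t)\;-\;2\int_{\Cal T}\sin^2(\eta-\varphi(t))\bigl[\dot\varphi(t)+R(t)\sin(\eta-\varphi(t))\bigr]\rho(t,\eta)\,\de\eta
\;=:\;\dot\varphi(t)+h(t),
\]
with $|h(t)|\le 4\int\sin^2(\eta-\varphi)\rho=4\dot R(t)/R(t)$, hence $h\in L^1(0,\infty)$. Now simply integrate $\dot\varphi=\ddot\varphi-h$ once:
\[
\varphi(t)-\varphi(0)=\int_0^t\dot\varphi(s)\,\de s=\int_0^t\bigl(\ddot\varphi(s)-h(s)\bigr)\de s=\dot\varphi(t)-\dot\varphi(0)-\int_0^t h(s)\,\de s.
\]
Since $\dot\varphi(t)\to 0$ (your step) and $\int_0^t h\to\int_0^\infty h$ is finite, the right-hand side converges, so $\varphi(t)$ converges. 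No rate on $\int\sin^2\rho$ beyond integrability of $\dot R/R$ is needed.
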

\proof
As in the discrete case, 
for a non stationary solution $R(t)\to R^*\in (0,1]$, as follows 
from the first of \eqref{orderpar}. Applying 
Lemma \ref{lemma1}
\begin{align}\label{sinzero}
\int_{\Cal T}\sin^2(\eta-\varphi(t))\rho(t,\eta)d\eta
\xrightarrow[t\longrightarrow +\infty]{}0;
\end{align}
and  from the second of \eqref{orderpar} we have
\begin{align}
|\dot{\varphi}(t)|&\leq\int_{\Cal T}|\sin(\eta-\varphi(t))\cos(\eta-\varphi(t))|\rho(t,\eta)d\eta\leq\\
&\leq\left[ \int_{\Cal T}\sin^2(\eta-\varphi(t))\rho(t,\eta)d\eta\right]^{\frac{1}{2}}\xrightarrow[t\longrightarrow +\infty]{}0,\nonumber
\end{align}
which implies that $\int_0^\infty \ddot{\varphi}(s)ds$ is finite.\\
Using again Lemma \ref{lemma1} and doing some calculations 
\begin{align}
\ddot{\varphi}(t)&=\dt \left[-\int_{\Cal T}\sin(\eta-\varphi(t))\cos(\eta-\varphi(t))\rho(t,\eta)d\eta\right]=\\
&=\int_{\Cal T} \cos (2(\eta-\varphi(t)))[\dot{\varphi}(t)+R(t)\sin(\eta-\varphi(t))]\rho(t,\eta)d\eta=\nonumber \\
&=\int_{\Cal T} (1-2\sin^2(\eta-\varphi(t)))[\dot{\varphi}(t)+R(t)\sin(\eta-\varphi(t))]\rho(t,\eta)=\nonumber\\
&=\dot{\varphi}(t)-2\int_{\Cal T}\sin^2(\eta-\varphi(t))[\dot{\varphi}(t)+R(t)\sin(\eta-\varphi(t))]\rho(t,\eta)\nonumber,
\end{align}
where, in the last identity, we used the second of eq.s \eqref{rphicont}.
The second term is  bounded by 
$$4\int_{\Cal T}\sin^2(\eta-\varphi(t))\rho(t,\eta)d\eta$$
which is summable in  $t\in [0,+\infty)$, then
\begin{align}
\phi^* := 
\lim_{t\to \infty}\varphi(t)=\phi(0)+
\int_0^\infty \dot{\phi}(s)ds \,\,\mbox{exists 
finite}.
\end{align}
\endproof
\noindent
Using this result we can prove the convergence of 
the characteristics $\Theta(t,\vartheta)$.
\begin{proposition}{}
\label{convtheta}
There exist $\alpha \in \Cal T$ such that
$$\lim_{t\to +\infty} 
\Theta(t,\vartheta ) = \begin{cases}
\phi^* \text{ for } \vartheta \in \Cal T\backslash \{\alpha\}\\
\phi^*+\pi \text{ for } \vartheta  = \alpha 
\end{cases}
$$
(these identities must by intended mod $2\pi$).
\end{proposition}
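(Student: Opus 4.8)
The plan is to reduce the analysis of the characteristics $\Theta(t,\vartheta)$ to the one-dimensional ODE $\dot\xi = -R(t)\sin(\xi-\varphi(t))$ they satisfy, exploiting that $R(t)\to R^*>0$, that $\varphi(t)\to\phi^*$, and the monotonicity/ordering properties of this scalar equation. First I would fix the limit $\phi^*$ and, for each $\vartheta$, set $\zeta(t,\vartheta)=\Theta(t,\vartheta)-\varphi(t)$, so that $\dot\zeta=-R(t)\sin\zeta-\dot\varphi(t)$ with $\dot\varphi(t)\to0$ and $\int_0^\infty|\dot\varphi|<\infty$. From \eqref{sinzero} together with the lower bound $R(t)\ge R^*>0$ we get $\int_0^\infty R(t)\big(\int_{\Cal T}\sin^2(\eta-\varphi(t))\rho(t,\eta)\de\eta\big)\de t<\infty$; pushing this back along the flow via the third line of \eqref{kuraidweak} with $h(\vartheta)=\sin^2(\vartheta-\phi^*)$ (or a suitable time-dependent observable) should show that for $\rho_0$-a.e.\ $\vartheta$ one has $\sin^2(\Theta(t,\vartheta)-\varphi(t))\to0$, hence $\zeta(t,\vartheta)\to0$ or $\zeta(t,\vartheta)\to\pi$ mod $2\pi$, i.e.\ $\Theta(t,\vartheta)$ converges to $\phi^*$ or $\phi^*+\pi$. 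The delicate point is that this a priori holds only $\rho_0$-a.e.; to get it for \emph{every} $\vartheta\in\Cal T$ I would use continuity and monotonicity of the flow map $\vartheta\mapsto\Theta(t,\vartheta)$ in $\vartheta$ (it is a monotone circle homeomorphism since the ODE has a Lipschitz right-hand side), so the convergence propagates to all of $\Cal T$ by sandwiching.

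Next I would identify the exceptional set. The phase-line equation $\dot\zeta=-R(t)\sin\zeta$ (ignoring the asymptotically negligible $\dot\varphi$) has $\zeta=0$ as an attractor and $\zeta=\pi$ as a repeller; the set of initial data flowing to $\pi$ is therefore at most a single point on the circle at each time, and by injectivity of the flow this corresponds to exactly one value $\alpha\in\Cal T$ of the original label (or none, if no characteristic tends to $\phi^*+\pi$). Concretely, I would argue: if two distinct labels $\vartheta_1\ne\vartheta_2$ both had $\Theta(t,\vartheta_i)\to\phi^*+\pi$, then writing $\Theta(t,\vartheta_i)-\varphi(t)=\xi_i(t)+\pi$ with $\xi_i\to0$ and $\dot\xi_i=R(t)\sin\xi_i-\dot\varphi(t)$, the difference satisfies, exactly as in \eqref{expl},
\begin{equation}
\dt[\xi_1(t)-\xi_2(t)]^2
=4(\xi_1-\xi_2)R(t)\sin\!\Big(\tfrac{\xi_1-\xi_2}{2}\Big)\cos\!\Big(\tfrac{\xi_1+\xi_2}{2}\Big)\ge C(\xi_1(t)-\xi_2(t))^2
\end{equation}
for $t$ large (using $R(t)\ge R^*>0$, $\cos\big(\tfrac{\xi_1+\xi_2}{2}\big)\to1$, and $x\sin(x/2)\ge x^2/\pi$ on $[-\pi,\pi]$), together with the fact that $\Theta(t,\vartheta_1)\ne\Theta(t,\vartheta_2)$ for all $t$ by uniqueness for the characteristic ODE, so $\xi_1(t)-\xi_2(t)\ne0$; Gr\"onwall then forces $|\xi_1-\xi_2|$ to grow, contradicting $\xi_i\to0$. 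Hence at most one label $\alpha$ is sent to $\phi^*+\pi$, and all others are sent to $\phi^*$.

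It remains to upgrade ``$\Theta(t,\vartheta)\to\phi^*$ for $\vartheta\ne\alpha$'' from a.e.\ or from ``converges to $\phi^*$ or $\phi^*+\pi$'' to ``converges to $\phi^*$ for \emph{all} $\vartheta\ne\alpha$, and to $\phi^*+\pi$ for $\vartheta=\alpha$ \emph{if} such a point exists.'' I would do this by the ordering of the flow: for fixed large $t$, $\vartheta\mapsto\Theta(t,\vartheta)$ is an orientation-preserving circle homeomorphism, and the set $\{\vartheta:\Theta(t,\vartheta)-\varphi(t)\in(-\pi/2,\pi/2)\}$ is an arc $A_t$ which, by the basin structure of the scalar equation and $\dot\varphi\to0$, eventually covers all of $\Cal T$ except a shrinking arc around the single bad label; any $\vartheta$ outside that shrinking arc (i.e.\ any $\vartheta\ne\alpha$) eventually enters the contracting region and converges to $\phi^*$. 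The main obstacle I anticipate is precisely this last measure-theoretic-to-pointwise upgrade: making rigorous that the ``bad'' set is a single point rather than merely $\rho_0$-null, which is why the ordering/monotonicity of the characteristic flow on the circle is essential — without it one only controls $\rho_0$-a.e.\ behaviour, and an atomic part of $\rho_0$ could a priori hide extra characteristics limiting to $\phi^*+\pi$. (Note the hypothesis that $\rho_0$ is non-atomic is \emph{not} needed for this proposition about the characteristics; it will be used afterwards to conclude $R^*=1$ and complete phase synchronization of $\rho$ itself, since the single possible atom at $\phi^*+\pi$ carries zero $\rho_0$-mass.)
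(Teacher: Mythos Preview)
Your proposal ultimately lands on the same two ingredients as the paper --- a direct phase-line analysis of the asymptotically autonomous equation $\dot\zeta=-R(t)\sin\zeta-\dot\varphi(t)$ to show each characteristic converges to $\phi^*$ or $\phi^*+\pi$, and the Gr\"onwall expansion argument of \eqref{expl} to show at most one label goes to $\phi^*+\pi$ --- but it reaches them by a circuitous route and leaves one point open.

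The $\rho_0$-a.e.\ step is a detour that cannot close on its own. Pushing \eqref{sinzero} back along the flow gives $\sin^2(\Theta(t,\vartheta)-\varphi(t))\to0$ only for $\rho_0$-a.e.\ $\vartheta$, and your sandwiching upgrade fails whenever $\rho_0$ has gaps in its support: the proposition concerns \emph{every} $\vartheta\in\Cal T$, including those outside $\mathrm{supp}\,\rho_0$. You sense this yourself, and your final paragraph then sketches the actual argument, which is exactly what the paper does from the start. The paper partitions $\Cal T$ into a small arc $A^0_n$ around $\phi^*$, a small arc $A^\pi_n$ around $\phi^*+\pi$, and the two connecting arcs $B^\pm_n$; using $R(t)\to R^*>0$ and $\varphi(t)\to\phi^*$ it finds times $t_n$ after which $G_n=\Cal T\setminus A^\pi_n$ is forward invariant and every point of $B^\pm_n$ reaches $A^0_n$ in a time independent of $n$. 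This yields pointwise convergence for every $\vartheta$ without any reference to $\rho_0$, and you should lead with it rather than append it as a fix.

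There is one genuine omission: you establish only that \emph{at most one} label is sent to $\phi^*+\pi$, explicitly allowing for none, whereas the proposition asserts existence of $\alpha$. The paper gets existence for free from its construction: since $\Theta(t,\cdot)$ is a circle homeomorphism, the preimages $\Theta(-t_n,A^\pi_n)$ are nested nonempty closed arcs, so their intersection $[\alpha_1,\alpha_2]$ is nonempty; every $\vartheta$ in it satisfies $\Theta(t_n,\vartheta)\in A^\pi_n$ for all $n$ and therefore converges to $\phi^*+\pi$, after which your Gr\"onwall step (identical to the paper's) collapses $[\alpha_1,\alpha_2]$ to a single point.
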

\begin{proof}
Consider $\Cal T$ as $\phi^*+[-\pi,\pi]$, and, for $n\ge 1$,
define the partition of $\Cal T$
$$\begin{aligned}
&A^0_{n} = \phi^*+[-1/n,1/n],\ \ A^\pi_{n}= \phi^*+[\pi-1/n,\pi] \cup
[-\pi,-\pi+1/n],\\
&B^+_n = \phi^*+(1/n,\pi-1/n),\ \ B^-_n = \phi^*+(-\pi+1/n,-1/n)
\end{aligned}$$
Since $R(t)\to R^*$ and $\varphi(t)\to \varphi^*$, 
there exists an increasing diverging sequence $t_n$ such that, for $t\ge t_n$
\begin{equation}\label{disB}
R(t)|\sin(\vartheta -\varphi(t))| \ge \frac{R^*}2 
|\sin(\vartheta - \varphi^*)|\ 
\text{ for } \vartheta \in B^+_n\cup B^-_n
\end{equation}
The subsets $G_{n} = A^0_{n}\cup B^+_n \cup B^-_n$
is invariant, in the sense that if for $\bar t \ge t_n$ 
$\Theta(\bar t,\vartheta) \in G_n$ then
$\Theta(t,\vartheta) \in G_n$ for all $t\ge \bar t$.
Note that also $A^0_n$ is invariant.
As a consequence,
if $n> m$
$$\Theta(-t_{n},A^\pi_{n})  \subseteq \Theta(-t_{m},A^\pi_{m})
\ \ \text{ and }\ \ 
\Theta(-t_{m},A^0_{m})  \subseteq \Theta(-t_{n},A^0_{n})
$$
Since  $\Theta(-t_{n},A^\pi_{n})$ are arcs of $\Cal T$,
it is well defined the arc
$$[\alpha_1,\alpha_2] = \bigcap_{n\ge 1} \Theta(-t_{n},A^\pi_{n})$$
By definition, if $\vartheta \notin [\alpha_1,\alpha_2]$
then $\Theta(t,\vartheta) \in G_n$ for all $t\ge t_n$, for all $n$. 
But for a finite $\tau$, independent on $n$, 
$\Theta(t_n+\tau,\vartheta) \in A^0_n$.
Using the invariance of $A^0_n$, we obtain that
$\Theta(t,\vartheta) \to \varphi^*$.

If $\vartheta \in [\alpha_1,\alpha_2]$, 
$\Theta(t_n,\vartheta)\in A^\pi_n$ for all $n$.
Suppose now that there exists $\bar t>t_n$ such that
$\Theta(\bar t,\vartheta)\notin A^\pi_n$. Then for the invariance 
of $G_n$, for all $m$ such that $t_{m} \ge \bar t$, 
$\Theta(t_{m},\vartheta)\in G_n\subset G_m$, and then
$\vartheta \in \Theta(-t_{m},G_{m})$ 
in contrast with the hypothesis on $\vartheta\in [\alpha_1,\alpha_2]$.
We conclude that 
$\Theta(t,\vartheta)\to \varphi^*+\pi$.

Finally, we can repeat the same argument of the proof of the 
second part of Theorem \ref{km1}, 
showing that, since $\Theta(t,\alpha_i) \to \varphi^*+\pi$, $i=1,2$,
asymptotically
$$\dt [\Theta(t,\alpha_1) - \Theta(t,\alpha_2)]^2 \ge  
C [\Theta(t,\alpha_1) - \Theta(t,\alpha_2)]^2.$$
then $\alpha_1 = \alpha_2 = \alpha$.
\end{proof}

\begin{theorem}\label{kkt}
If $\rho(\vartheta,t)$ is not a stationary solution then 
\begin{equation}
\rho(t,\vartheta) \xrightarrow[t \rightarrow \infty]{weak-*}\rho^*(\vartheta).
\end{equation}
\noindent
where $\rho^*(\vartheta)$ is a stationary solution of type $(c_1,c_2)$.

Moreover, if $\rho_0(\vartheta)$ is non-atomic, then 
$\rho^*=\delta(\vartheta-
\varphi^*)$, i.e. is a complete phase synchronized state.
\end{theorem}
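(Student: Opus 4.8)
The plan is to read off the convergence of $\rho(t,\cdot)$ from the convergence of the characteristics established in Proposition \ref{convtheta}, by passing to the limit in the weak formulation. Since \eqref{kuraidweak} says precisely that $\rho(t,\cdot)$ is the push-forward of $\rho_0$ under the flow map $\Theta(t,\cdot)$, for every continuous $2\pi$-periodic observable $h$ (the identity in \eqref{kuraidweak} is stated for regular $h$, and extends to all continuous $h$ by density, the $\rho(t,\cdot)$ having total mass $1$) we have
\[
\int_{\Cal T} h(\vartheta)\,\rho(t,\vartheta)\,\de\vartheta
= \int_{\Cal T} h(\Theta(t,\vartheta))\,\rho_0(\vartheta)\,\de\vartheta .
\]
By Proposition \ref{convtheta} there is $\alpha\in\Cal T$ with $\Theta(t,\vartheta)\to\varphi^*$ for every $\vartheta\neq\alpha$ and $\Theta(t,\alpha)\to\varphi^*+\pi$. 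Hence, by continuity of $h$, the integrand converges pointwise in $\vartheta$ and is dominated by the constant $\|h\|_\infty$, so dominated convergence against the finite measure $\rho_0$ gives
\[
\int_{\Cal T} h(\vartheta)\,\rho(t,\vartheta)\,\de\vartheta
\xrightarrow[t\to\infty]{} (1-c_2)\,h(\varphi^*)+c_2\,h(\varphi^*+\pi),
\qquad c_2:=\rho_0(\{\alpha\}).
\]
This is exactly $\int_{\Cal T} h\,\de\rho^*$ with $\rho^*(\vartheta)=c_1\delta(\vartheta-\varphi^*)+c_2\delta(\vartheta-\varphi^*-\pi)$ and $c_1:=1-c_2$, i.e. the asserted weak-$*$ convergence.

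It then remains to check that $\rho^*$ is genuinely of type $(c_1,c_2)$ in the sense of Proposition \ref{propo:uguali-cin}, that is $c_1>c_2\ge 0$; only $c_1>c_2$ requires an argument. Applying the computation above to $h(\vartheta)=\cos\vartheta$ and $h(\vartheta)=\sin\vartheta$ and using the definition of $R$ and $\varphi$ in \eqref{kuraidweak}, one gets $R(t)\e^{\imm\varphi(t)}\to (c_1-c_2)\e^{\imm\varphi^*}$. On the other hand, for a non-stationary solution $R(t)\to R^*\in(0,1]$ (as recalled in the proof that $\varphi$ converges) and $\varphi(t)\to\varphi^*$, so that $R(t)\e^{\imm\varphi(t)}\to R^*\e^{\imm\varphi^*}$. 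Comparing the two limits gives $R^*=c_1-c_2>0$, hence $c_1>c_2$, and by Proposition \ref{propo:uguali-cin} $\rho^*$ is a stationary solution.

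Finally, the second assertion is immediate: if $\rho_0$ is non-atomic then $c_2=\rho_0(\{\alpha\})=0$, hence $c_1=1$ and $\rho^*=\delta(\vartheta-\varphi^*)$, a complete phase synchronized state.

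The substance of the proof is entirely concentrated in Proposition \ref{convtheta}; at this stage the only point needing some care is the role of the single exceptional characteristic through $\alpha$, but since $\rho_0$ assigns to $\{\alpha\}$ a fixed finite mass, the passage to the limit in the weak formulation is a routine application of dominated convergence, and I do not expect a genuine obstacle here.
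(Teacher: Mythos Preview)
Your proof is correct and follows essentially the same route as the paper: pass to the limit in the weak formulation via the pointwise convergence of characteristics from Proposition~\ref{convtheta}, using dominated convergence against $\rho_0$. You even add a small point the paper omits, namely the verification that $c_1>c_2$ (so that $\rho^*$ is genuinely of type $(c_1,c_2)$), which you obtain cleanly from $R(t)\e^{\imm\varphi(t)}\to R^*\e^{\imm\varphi^*}$ with $R^*>0$.
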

\begin{proof}
Let be $h$ a regular periodic observable.
Using Proposition \ref{convtheta}
$$\int_{\Cal T} h(\vartheta) \rho(t,\vartheta)\de \vartheta
= \int_{\Cal T} h(\Theta(t,\vartheta)) \rho_0(\vartheta)\de \vartheta 
\to c_1 h(\phi^*) + c_2 h(\phi^*+\pi)$$
where
$$c_1 = \int_{\Cal T\backslash \{\alpha\}}  \rho_0(\vartheta)\de \vartheta,\ \
c_2 = 1-c_1$$
and $c_2$ is the measure that $\rho_0$ gives to the point $\alpha$,
which is zero if $\rho_0(\vartheta)$ gives zero measure to the points.
\end{proof}

\section{Some considerations on the kinetic model for non identical oscillators}

The following equations describe the dynamic of infinitely many non
identical oscillators in the kinetic limit.
\label{sez:kneq}

\begin{equation}\label{noneq}
\begin{cases}
\partial_t f(t,\vartheta,\omega)+ \partial_\vartheta(v(t,\vartheta,\omega)f(t,\vartheta,\omega))=0,\\
v(t,\vartheta,\omega)=\omega -K\int_{\mathcal{T}\times \R} \sin(\vartheta-\vartheta')f(t,\vartheta',\omega')d\vartheta'd\omega'\\
\end{cases},
\end{equation}
where $f(t,\vartheta,\omega)$ is a positive $2\pi$-periodic function
in $\vartheta$, which represents the probability density of oscillators
with phase $\vartheta$ and frequency $\omega$.  The marginal
$\rho(t,\vartheta)=\int_\R f(t,\vartheta,\omega)d\omega$ is the
probability density of the phases.
The distribution of the natural frequencies 
is $g(\omega ) = \int_{\Cal T}  f(t,\vartheta, \omega)d\vartheta$,
which is a conserved quantity.

A reference for existence and uniqueness results for this equation is
still \cite{lancellotti} where the kinetic model \eqref{noneq} is
rigorously derived by doing the $N\to \infty$ limit of \eqref{kuram}.
A weak 
formulation of \eqref{noneq} can be given in terms of the 
characteristics $\Theta(t,\vartheta,\omega)$:
\begin{equation}\label{weakne}
\left\{ 
\begin{aligned}
&\dot{\Theta}(t,\vartheta,\omega)=\omega-KR(t)\sin({\Theta}(t,\vartheta,\omega)-\varphi(t)),\,\, \mbox{with} \,\,{\Theta}(0,\vartheta,\omega)=\vartheta\\
&R(t)\e^{\imm\varphi(t)}=
\int_{\Cal T\times \R} \e^{\mathbf{i}\vartheta}f(t,\vartheta,\omega)d\vartheta d\omega \\
&\int_{\Cal T \times \R} f(t,\vartheta,\omega)h(\vartheta,\omega)d\vartheta d\omega
  =\int_{\Cal T \times \R} f_0(\eta,\omega)h(\Theta(t,\eta,\omega),\omega)d\eta d\omega\\
\end{aligned}\right.
\end{equation}
where $h$ is any regular function of $(\vartheta,\varphi)\in 
\mathcal{T}\times\mathbb{R}$.
 Without loss of generality we can assume
\begin{equation}
\langle\omega\rangle=\int_{\R} \omega g(\omega)d\omega=0\quad \langle\vartheta\rangle=\int_{[-\pi,\pi)}\vartheta \rho_0(\vartheta)d\vartheta=0,
\end{equation}
By the previous assumption it following that
\begin{align}
&\int_{[-\pi,\pi)\times \R} \Theta(t,\vartheta,\omega)f_0(\vartheta,\omega) d\vartheta d\omega=0.
\end{align}

As shown in \cite{bonilla}, when $g$ has compact support and $K$ is
sufficiently large, there exist stationary solutions 
$f^*$, which are in some sense the analogous of the two delta solutions
for the case of identical oscillators described in Proposition  
\ref{propo:uguali-cin}.
Imposing the current $v=\omega
-KR\sin(\theta-\varphi)$ to be zero we obtain
\begin{equation}
\begin{aligned}\label{sss}
&f^*(\vartheta,\omega)=g^+(\omega)\delta(\vartheta-\vartheta^+(\omega))+g^-(\omega)\delta(\vartheta-\vartheta^-(\omega))\\
&\vartheta^+(\omega)=\varphi^*+\arcsin\left(\frac{\omega}{KR}\right),\,\,\vartheta^-(\omega)=\pi +\varphi^*-\arcsin\left(\frac{\omega}{KR}\right)\\
&g^+,g^-\geq 0,\,\, g^+ + g^-=g
\end{aligned}
\end{equation}
with $R$ satisfying the following equation of 
self consistency
(which
has solutions for $K$ large enough)
\begin{equation}
KR^2=\int_{\R}\sqrt{(KR)^2-\omega^2}\Big[g^+(\omega)-g^-(\omega)\Big]d\omega.\\
\end{equation}
Taking the marginal of $f^*$ the particle density $\rho^*$ is
\begin{align}
 \rho^*(\vartheta)&=KR|\cos(\vartheta-\varphi^*)|g^+(KR\sin(\vartheta-\varphi^*))\fcr_{|\vartheta-\varphi^*|<\frac{\pi}{2}}+\\
 &+KR|\cos(\vartheta-\varphi^*)|g^-(KR\sin(\vartheta-\varphi^*))\fcr_{|\vartheta -(\pi +\varphi^*)|<\frac{\pi}{2}}.\nonumber
\end{align}
Particular relevance have the stable solutions (see \cite{carrillo})
which are the ones with $g^+(\omega) = g(\omega)$ and $g^-(\omega) =0$:
\begin{align}\label{staz}
f^*(\vartheta,\omega)=g(\omega)\delta\left(\vartheta-\vartheta^+(\omega)\right)
\end{align}
where $R\in(0,1]$ is the largest solution of 
\begin{equation}\label{selfc}
KR^2=\int_\R\sqrt{(KR)^2-\omega^2}g(\omega)d\omega.\\
\end{equation}
To the authors' knowledge 
the best result of
 convergence to an equilibrium of this kind is in \cite{carrillo},
where complete frequency synchronization is proved for initial phases lying
in a compact subset of $(-\frac{\pi}{2},\frac{\pi}{2})$, 
although the expression of equilibrium density 
$\rho^*(\vartheta)=KR\cos(\vartheta-\varphi^*)g(KR\sin(\vartheta-\varphi^*))$
is not explicitly written in the paper.

In the case of non identical oscillators, the order parameter $R(t)$ is
no more increasing in general:
\begin{equation}
\dot{R}(t)=KR(t) \int_{ \Cal T}
\sin^2(\eta-\varphi(t))\rho(t,\eta)d\eta-
\int_{\Cal T\times \R} \omega\sin(\eta-\varphi(t))f(t,\eta,\omega)d\eta d\omega
\end{equation}
\begin{align}
R(t)\dot{\varphi}(t)&=-KR(t)
\int_{\Cal T}\sin(\eta-\varphi(t))\cos(\eta-\varphi(t))\rho(t,\eta)d\eta\nonumber\\
 &+\int_{\Cal T\times \R}
\omega\cos(\eta-\varphi(t))f(t,\eta,\omega)d\eta d\omega
\end{align}
then we can not extend the convergence result of the previous
section to this case.
Nevertheless, we can characterize the 
possible limits of the solution, excluding also in 
this case the ``two delta 
solutions'' as generic asymptotic behaviour.
\begin{proposition}
\label{convnoid}
Suppose that $f_0(\vartheta,\omega)\de \vartheta$ is non-atomic 
for any $\omega$. 
If, as $t\to +\infty$,  
$R(t)\to R^*>0$ with $\mathrm{supp}\ g \subset [-KR^*,KR^*]$, and 
$\varphi(t) \to \varphi^*$,
than $f$ converges weakly to $f^*$ given by \eqref{staz} and 
$R^*$ solves \eqref{selfc}.
\end{proposition}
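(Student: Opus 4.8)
The plan is to mimic the identical-oscillator argument of Section~\ref{sez:continuo}, but now using the two stronger hypotheses—$R(t)\to R^*>0$ with $\mathrm{supp}\,g\subset[-KR^*,KR^*]$, and $\varphi(t)\to\varphi^*$—as replacements for the monotonicity of $R$ and the self-consistent derivation of $\varphi^*$ that are unavailable here. First I would pass to the characteristics $\Theta(t,\vartheta,\omega)$ and, for each fixed $\omega$, study the scalar ODE $\dot\Theta=\omega-KR(t)\sin(\Theta-\varphi(t))$. Since $R(t)\to R^*$ and $\varphi(t)\to\varphi^*$, this is an asymptotically autonomous equation whose limit vector field is $\omega-KR^*\sin(\Theta-\varphi^*)$. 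Because $|\omega|\le KR^*$, this limit field has equilibria exactly at $\vartheta^+(\omega)=\varphi^*+\arcsin(\omega/KR^*)$ (stable, sitting in the arc $|\vartheta-\varphi^*|<\pi/2$) and $\vartheta^-(\omega)=\pi+\varphi^*-\arcsin(\omega/KR^*)$ (unstable), coalescing only when $|\omega|=KR^*$. The key dynamical fact is that the basin of the stable equilibrium is the complement of the single unstable point, so for every starting phase except one the characteristic converges to $\vartheta^+(\omega)$.

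The next step is to make this basin argument uniform enough to survive the time-dependence. I would reproduce the invariant-neighbourhood construction of Proposition~\ref{convtheta}: fix $\omega$, choose a time $t_\omega$ after which $KR(t)|\sin(\vartheta-\varphi(t))-\text{(shift)}|$ dominates half of the limiting field outside small neighbourhoods of the two equilibria, build the invariant set $G_n(\omega)$ that is the complement of a shrinking arc around $\vartheta^-(\omega)$, take the nested intersection of the pre-images $\Theta(-t_n,\cdot)$ of those arcs, and conclude that this intersection is a single point $\alpha(\omega)$—the argument for collapsing a possible interval $[\alpha_1,\alpha_2]$ to a point is verbatim the quadratic-growth estimate $\frac{d}{dt}[\Theta(t,\alpha_1,\omega)-\Theta(t,\alpha_2,\omega)]^2\ge C[\cdots]^2$ used in Theorems~\ref{km1} and in Proposition~\ref{convtheta}, now with the coefficient $KR(t)\ge KR^*/2$ eventually. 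Hence for each $\omega$, $\Theta(t,\vartheta,\omega)\to\vartheta^+(\omega)$ for all $\vartheta\ne\alpha(\omega)$.

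Then the conclusion follows by the weak formulation. For a test function $h(\vartheta,\omega)$,
\begin{equation*}
\int_{\Cal T\times\R} h(\vartheta,\omega)f(t,\vartheta,\omega)\,d\vartheta\,d\omega
=\int_{\Cal T\times\R} h(\Theta(t,\eta,\omega),\omega)f_0(\eta,\omega)\,d\eta\,d\omega,
\end{equation*}
and the integrand converges pointwise a.e.\ to $h(\vartheta^+(\omega),\omega)$ because, by the non-atomicity hypothesis on $f_0(\cdot,\omega)$, the exceptional set $\{\eta=\alpha(\omega)\}$ has $f_0$-measure zero for every $\omega$ (and hence, by Fubini, product-measure zero). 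Dominated convergence then gives $\int h\,f(t)\to\int_\R h(\vartheta^+(\omega),\omega)g(\omega)\,d\omega$, which is precisely pairing $h$ against $f^*$ from \eqref{staz}. Taking $h(\vartheta,\omega)=e^{\imm\vartheta}$ and using $R(t)e^{\imm\varphi(t)}\to R^*e^{\imm\varphi^*}$ yields $R^*e^{\imm\varphi^*}=\int_\R e^{\imm\vartheta^+(\omega)}g(\omega)\,d\omega$; separating real and imaginary parts (the imaginary part vanishes since $\arcsin$ is odd and—after recentering—$g$ may be taken even, or more robustly since both sides must be consistent with the fixed mean phase) gives the self-consistency relation $KR^*{}^2=\int_\R\sqrt{(KR^*)^2-\omega^2}\,g(\omega)\,d\omega$, i.e.\ \eqref{selfc}.

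The main obstacle I expect is the non-autonomous nature of the characteristic equation: the clean basin-of-attraction statement is for the limiting autonomous field, and one must check that the convergence $R(t)\to R^*$, $\varphi(t)\to\varphi^*$ is ``fast enough'' only in the weak sense of not destroying the invariant-set construction—there is no rate hypothesis, so the argument must be purely topological (invariance of nested arcs) exactly as in Proposition~\ref{convtheta}, rather than a perturbative/Gr\"onwall estimate. A secondary delicate point is measurability and uniformity of $\alpha(\omega)$ and $t_\omega$ in $\omega$: one wants the exceptional set $\{(\eta,\omega):\eta=\alpha(\omega)\}$ to be genuinely $f_0$-negligible, which the stated hypothesis ``$f_0(\vartheta,\omega)\,d\vartheta$ non-atomic for any $\omega$'' is designed to guarantee, but the a.e.-in-$\omega$ bookkeeping (and the harmless fact that near $|\omega|=KR^*$ the two equilibria merge, so there is nothing to prove there) should be spelled out carefully.
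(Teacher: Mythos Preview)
Your proposal is correct and follows exactly the route the paper sketches: convergence of the characteristics $\Theta(t,\vartheta,\omega)\to\vartheta^\pm(\omega)$ via the invariant-arc construction of Proposition~\ref{convtheta}, collapse of the exceptional interval to a single point $\alpha(\omega)$ via the quadratic-growth inequality of Theorem~\ref{km1}, and weak convergence of $f$ by dominated convergence in the weak formulation, using non-atomicity of $f_0(\cdot,\omega)$ to kill the exceptional set. One small correction: in deriving \eqref{selfc}, the imaginary part of $\int_\R e^{\imm\arcsin(\omega/KR^*)}g(\omega)\,d\omega$ equals $(KR^*)^{-1}\int_\R\omega\,g(\omega)\,d\omega$, which vanishes by the standing assumption $\langle\omega\rangle=0$, not by any evenness of $g$.
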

The proof follows as in Proposition \ref{convtheta}
and Theorem \ref{kkt}: we first can prove the convergence of 
$\Theta(t,\vartheta,\omega)$ to $\vartheta^\pm(\omega)$ mod $2\pi$, 
for $|\omega| < KR^*$,
than we show there exists only one value of $\vartheta\in \Cal T$ such that
$\Theta(t,\vartheta,\omega) \to \vartheta^-(\omega)$ mod $2\pi$, 
finally we prove the weak convergence of $f$ using the 
convergence of the characteristics.

Note that eq. \eqref{selfc} can have two solutions (see \cite{lyapunov},
\cite{mirollo}),
then Proposition \ref{convnoid} does not assure the convergence 
to the stable stationary solution.

\vskip.3cm

The asymptotic behaviour in the case of non identical oscillators
can be complex, even 
if the system is still of gradient type (in a different space).
The functional is
\begin{equation}\label{ener}
\mathcal{H}_{f}(t)=\int_{[-\pi,\pi)\times \R} 
\Theta(t,\vartheta,\omega) \omega f_0(\vartheta,\omega)d\vartheta d
\omega+K\frac{R^2(t)}{2}.
\end{equation}
which is non decreasing along the solutions.
In contrast with the case of identical oscillators, 
this functional is not well defined on the function on $\Cal T\times R$, 
and it is unbounded.

There is another functional with a monotone behaviour, 
related to the entropy.
\begin{proposition}
If $\Theta(t,\vartheta, \omega)$ is solution of \eqref{weakne} then
\begin{equation}
\label{jacobiano}
\frac{d}{dt}\int_{{\Cal T}\times \R} 
\ln\left(\frac{\partial}{\partial \vartheta}\Theta(t,\vartheta, \omega)\right)f_0(\vartheta,\omega)d\vartheta d\omega=-KR^2(t)
\end{equation}
\end{proposition}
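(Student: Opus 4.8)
The plan is to differentiate the Jacobian of the characteristic flow along trajectories and then recognise the resulting integral as the order parameter. Set $J(t,\vartheta,\omega):=\dfrac{\partial}{\partial \vartheta}\Theta(t,\vartheta,\omega)$. Since $\Theta(0,\cdot,\omega)$ is the identity map, $J(0,\vartheta,\omega)=1$, and because the right-hand side of the characteristic equation in \eqref{weakne} is smooth in $\Theta$ with coefficients that are merely $t$-dependent and bounded, the flow $\vartheta\mapsto\Theta(t,\vartheta,\omega)$ is a $C^1$ diffeomorphism of $\Cal T$; in particular $J>0$ for all $t$, so $\ln J$ is well defined and bounded on compact time intervals.

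First I would differentiate the characteristic equation $\dot\Theta=\omega-KR(t)\sin(\Theta-\varphi(t))$ with respect to $\vartheta$ and exchange $\partial_t$ and $\partial_\vartheta$ (legitimate by the $C^1$ regularity above), obtaining the linear variational equation
\[
\partial_t J(t,\vartheta,\omega)=-KR(t)\cos\big(\Theta(t,\vartheta,\omega)-\varphi(t)\big)\,J(t,\vartheta,\omega),
\]
hence $\partial_t\ln J(t,\vartheta,\omega)=-KR(t)\cos(\Theta(t,\vartheta,\omega)-\varphi(t))$. Integrating against $f_0$ and bringing the time derivative inside the integral — which is allowed since $|\partial_t\ln J|\le K$ uniformly in $(\vartheta,\omega)$ and $f_0$ is a probability measure — gives
\[
\frac{d}{dt}\int_{\Cal T\times\R}\ln J(t,\vartheta,\omega)\,f_0(\vartheta,\omega)\,d\vartheta\,d\omega
=-KR(t)\int_{\Cal T\times\R}\cos\big(\Theta(t,\vartheta,\omega)-\varphi(t)\big)\,f_0(\vartheta,\omega)\,d\vartheta\,d\omega.
\]

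Second I would evaluate the integral on the right. For each fixed $t$ the function $h(\vartheta,\omega)=\cos(\vartheta-\varphi(t))$ is a smooth $2\pi$-periodic observable, so the weak formulation (third line of \eqref{weakne}) gives
\[
\int_{\Cal T\times\R}\cos\big(\Theta(t,\vartheta,\omega)-\varphi(t)\big)\,f_0(\vartheta,\omega)\,d\vartheta\,d\omega
=\int_{\Cal T\times\R}\cos(\vartheta-\varphi(t))\,f(t,\vartheta,\omega)\,d\vartheta\,d\omega
=\mathrm{Re}\Big(\e^{-\imm\varphi(t)}R(t)\e^{\imm\varphi(t)}\Big)=R(t),
\]
where the last equality uses the definition of the order parameter in the second line of \eqref{weakne}. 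Combining the two displays yields $\dfrac{d}{dt}\int\ln J\,f_0=-KR^2(t)$, which is \eqref{jacobiano}.

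The computation is short, and there is no real obstacle: the two points that deserve a line of justification are (i) that the characteristic flow is $C^1$ in $\vartheta$, so that $J$ and its variational equation make sense (standard ODE theory, using only continuity of $R(t)$ and $\varphi(t)$ in $t$, which holds because they are continuous moments of the weak solution), and (ii) the interchange of $d/dt$ with the $(\vartheta,\omega)$-integration, which is covered by the uniform bound $|\partial_t\ln J|\le K$.
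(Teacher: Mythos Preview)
Your proof is correct and follows essentially the same route as the paper: differentiate the variational equation for $J=\partial_\vartheta\Theta$ to obtain $\partial_t\ln J=-KR(t)\cos(\Theta-\varphi)$, integrate against $f_0$, and recognise the resulting integral as $R(t)$. The paper's own proof is just the two-line computation you carry out, without the auxiliary justifications (positivity of $J$, interchange of $d/dt$ with the integral) that you spell out.
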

\proof
Th r.h.s of \eqref{jacobiano} is
\begin{equation}
\begin{aligned}
&\int_{{\Cal T}\times \R} 
\left[\frac{\partial\Theta}{\partial \vartheta}\right]^{-1}
\frac{d}{dt}\frac{\partial\Theta}{\partial \vartheta} 
f_0(\vartheta,\omega)d\vartheta d\omega \\
 &=-\int_{{\Cal T}\times \R}  KR(t)\cos(\Theta(t,\vartheta,\omega)-\varphi(t))f_0(\vartheta,\omega)d\vartheta d\omega=-KR^2(t)
\end{aligned}\end{equation}
\endproof
This proposition makes sense for any initial data of \eqref{noneq}, 
and shows the tendency of the system to  shrink the solution in the 
$\vartheta$ variabile.
If $f_0$ is absolute continuous w.r.t. the Lebesgue measure, 
this functional can be rewritten, modulus a constant
that explodes if $f$ become singular, in a more eloquent way.
\begin{proposition}
The entropy is non decreasing along the solutions of \eqref{noneq}
\begin{equation}
\frac{d}{dt}\int_{\Cal T\times \R} f(t,\vartheta,\omega)\ln(f(t,\vartheta,\omega))d\vartheta d\omega=KR^2(t).
\end{equation}
\end{proposition}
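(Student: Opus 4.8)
The plan is to reduce this entropy statement to the previous proposition on the log-Jacobian, using the standard change-of-variables formula that relates the density $f(t,\cdot)$ to the initial density $f_0$ along the characteristics. Concretely, if $f_0$ is absolutely continuous with respect to Lebesgue measure, then for each fixed $\omega$ the map $\vartheta \mapsto \Theta(t,\vartheta,\omega)$ is a diffeomorphism of $\Cal T$ onto itself (it is the flow of a smooth ODE in $\vartheta$), and pushing $f_0$ forward along this flow gives
\begin{equation}
\label{pushforward}
f(t,\Theta(t,\vartheta,\omega),\omega)\,\frac{\partial}{\partial\vartheta}\Theta(t,\vartheta,\omega) = f_0(\vartheta,\omega).
\end{equation}
This is exactly the weak formulation in \eqref{weakne} read as a statement about densities rather than observables.

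Next I would compute the entropy by changing variables in the integral back to the initial configuration. Writing $\vartheta' = \Theta(t,\vartheta,\omega)$ and using \eqref{pushforward},
\begin{equation}
\int_{\Cal T\times\R} f(t,\vartheta',\omega)\ln f(t,\vartheta',\omega)\,d\vartheta'\,d\omega
= \int_{\Cal T\times\R} \ln\!\bigl(f(t,\Theta(t,\vartheta,\omega),\omega)\bigr)\,f_0(\vartheta,\omega)\,d\vartheta\,d\omega.
\end{equation}
From \eqref{pushforward}, $\ln f(t,\Theta,\omega) = \ln f_0(\vartheta,\omega) - \ln\bigl(\partial_\vartheta\Theta(t,\vartheta,\omega)\bigr)$, so the entropy equals
\begin{equation}
\int_{\Cal T\times\R} \ln f_0(\vartheta,\omega)\,f_0(\vartheta,\omega)\,d\vartheta\,d\omega \;-\; \int_{\Cal T\times\R}\ln\!\bigl(\partial_\vartheta\Theta(t,\vartheta,\omega)\bigr)\,f_0(\vartheta,\omega)\,d\vartheta\,d\omega.
\end{equation}
The first term is the initial entropy, a constant independent of $t$; differentiating in $t$ and invoking the previous proposition \eqref{jacobiano} immediately yields $\frac{d}{dt}\int f\ln f = -(-KR^2(t)) = KR^2(t)$, which is the claim. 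This also makes transparent the remark in the text that the two functionals differ by the constant $\int f_0\ln f_0$, which is exactly what "explodes if $f$ becomes singular" refers to when one tries to run the argument backwards.

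The main obstacle is justifying the differentiation under the integral sign and the validity of the change of variables uniformly enough to differentiate in $t$: one needs $\partial_\vartheta\Theta(t,\vartheta,\omega)$ to stay bounded away from $0$ and $\infty$ on compact time intervals (which follows from Gronwall applied to the linear variational equation $\frac{d}{dt}\partial_\vartheta\Theta = -KR(t)\cos(\Theta-\varphi)\,\partial_\vartheta\Theta$, giving the explicit bound $\partial_\vartheta\Theta(t,\vartheta,\omega)=\exp\bigl(-\int_0^t KR(s)\cos(\Theta(s,\vartheta,\omega)-\varphi(s))\,ds\bigr)$), and one needs enough integrability of $f_0\ln f_0$ for the initial-entropy term to be finite. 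Since the statement is essentially formal and the paper treats it at the level of the earlier propositions, I would present the computation at the same level of rigor, noting only that the argument is legitimate whenever $f_0$ is absolutely continuous with finite initial entropy and remarking that the constant $\int f_0\ln f_0$ is precisely the obstruction to extending the identity to singular data.
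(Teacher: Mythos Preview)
Your proposal is correct and is precisely the argument the paper has in mind: the text explicitly says that, for absolutely continuous $f_0$, the log-Jacobian functional of the previous proposition ``can be rewritten, modulus a constant that explodes if $f$ becomes singular,'' as the entropy, and you have simply spelled out that rewriting via the change-of-variables identity $f(t,\Theta,\omega)\,\partial_\vartheta\Theta = f_0(\vartheta,\omega)$. The paper gives no further proof beyond this remark, so your level of detail actually exceeds theirs.
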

\noindent
In the hypothesis of the results presented in \cite{carrillo} 
and in that of Proposition \ref{convnoid}, 
while $f$ approaches $f^*$ the entropy grows to
infinity. 

If the entropy does not diverge, $R\to 0$,
so the system behaves as an incoherent state.
More precisely we can prove the following two propositions.

\begin{proposition}
If the entropy does not diverge, then 
the functional  $\mathcal{H}_{f}$ asymptotically grows as in the 
case of incoherent states: the limit
\begin{equation}
\lim _{t \to \infty}\left[\int_{[-\pi,\pi)\times \R} \Theta(t,\vartheta,\omega)\omega f_0(\vartheta, \omega)d\vartheta d\omega - \left(\int_{[-\pi,\pi)\times \R} \omega^2 f_0(\vartheta, \omega)d\vartheta d\omega\right)t \right]
\end{equation}
is finite. In other words, the functional  $\mathcal{H}_{f}$ grows as in the
case of free flows ($K=0$).
\end{proposition}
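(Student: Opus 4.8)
The plan is to differentiate the functional in the statement, to identify its derivative with an integrable expression (plus an exact differential), and then to integrate back. Write $\sigma^2:=\int_{[-\pi,\pi)\times\R}\omega^2 f_0(\vartheta,\omega)\,\de\vartheta\,\de\omega=\int_\R\omega^2 g(\omega)\,\de\omega$, which is constant in time because $g$ is conserved; this is the coefficient of $t$ in the statement. Differentiating under the integral sign and using the characteristic equation $\dot{\Theta}=\omega-KR\sin(\Theta-\varphi)$ of \eqref{weakne} gives
\begin{equation}
\frac{d}{dt}\!\left[\int_{[-\pi,\pi)\times\R}\Theta(t,\vartheta,\omega)\,\omega\, f_0\,\de\vartheta\,\de\omega-\sigma^2 t\right]=-KR(t)\int_{[-\pi,\pi)\times\R}\omega\sin\!\big(\Theta(t,\vartheta,\omega)-\varphi(t)\big)f_0\,\de\vartheta\,\de\omega.
\end{equation}
The computation giving the first of the $\dot R,\dot\varphi$ formulas of Section \ref{sez:kneq}, rewritten along the characteristics, reads $\dot R=KR\int\sin^2(\Theta-\varphi)f_0-\int\omega\sin(\Theta-\varphi)f_0$, so substituting $\int\omega\sin(\Theta-\varphi)f_0=KR\int\sin^2(\Theta-\varphi)f_0-\dot R$ rewrites the right-hand side above as
\begin{equation}\label{eq:plansub}
-K^2R^2(t)\int_{[-\pi,\pi)\times\R}\sin^2(\Theta-\varphi)f_0\,\de\vartheta\,\de\omega+\tfrac12\frac{d}{dt}\big(KR^2(t)\big).
\end{equation}

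The hypothesis enters only through the fact that $\int_0^{+\infty}KR^2(s)\,\de s<+\infty$: this is equivalent to ``the entropy does not diverge'' because, by the identity $\frac{d}{dt}\int\ln(\pa_\vartheta\Theta)f_0=-KR^2$, the functional $\int\ln(\pa_\vartheta\Theta)f_0$ starts at $0$, is non-increasing, and $\int_0^t KR^2=-\int\ln(\pa_\vartheta\Theta(t,\cdot))f_0$, which is bounded in $t$ iff the entropy is. In \eqref{eq:plansub} the first term is then integrable on $[0,+\infty)$, since $0\le\int\sin^2(\Theta-\varphi)f_0\le1$ makes it bounded by $K\cdot KR^2(s)$; the second term is an exact differential. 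It remains to note that $KR^2(t)\to0$, which is the remark ``$R\to0$'' preceding the proposition: indeed $t\mapsto\int e^{\imm\Theta}f_0=R(t)e^{\imm\varphi(t)}$ is Lipschitz (its derivative is bounded by $\int|\dot\Theta|f_0\le\int|\omega|g\,\de\omega+K$), hence $R^2$ is uniformly continuous, and a nonnegative, uniformly continuous function with $\int_0^\infty R^2<\infty$ must tend to $0$. Integrating \eqref{eq:plansub} from $0$ to $t$,
\begin{equation}
\int_{[-\pi,\pi)\times\R}\Theta(t,\vartheta,\omega)\,\omega\, f_0\,\de\vartheta\,\de\omega-\sigma^2 t=C_0-\int_0^t\!K^2R^2(s)\!\int\sin^2(\Theta-\varphi)f_0\,\de\vartheta\,\de\omega\,\de s+\tfrac12\big(KR^2(t)-KR^2(0)\big),
\end{equation}
with $C_0=\int_{[-\pi,\pi)\times\R}\vartheta\,\omega\, f_0\,\de\vartheta\,\de\omega$; letting $t\to+\infty$, the right-hand side converges to a finite limit, which proves the claim.

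The one delicate point is the set of times at which $R(t)=0$, where $\varphi(t)$ is undefined and the formula for $\dot R$ and the expression \eqref{eq:plansub} must be read appropriately. I would run the argument in terms of the vector $Z(t):=\int_{[-\pi,\pi)\times\R}e^{\imm\Theta}f_0\,\de\vartheta\,\de\omega$, whose real quantities $KR\sin(\Theta-\varphi)$ and $K^2R^2\sin^2(\Theta-\varphi)$ integrated against $f_0$ are smooth functions of $Z$ and of $\Theta$; the Lipschitz bound on $R=|Z|$ is then immediate and no step of the reduction uses the existence of $\varphi$. This bookkeeping is the main (and only mild) obstacle; the rest is a direct computation.
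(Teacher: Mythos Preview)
Your proof is correct and follows essentially the same route as the paper: differentiate the functional, substitute via the identity $\frac{d}{dt}\frac{R^2}{2}=KR^2\int\sin^2(\eta-\varphi)\rho-R\int\omega\sin(\eta-\varphi)f$ to split the derivative into an integrable piece (dominated by a constant times $KR^2$) plus an exact differential, and conclude using $\int_0^\infty R^2<\infty$ and $R\to 0$. Your treatment of the set $\{R=0\}$ via $Z(t)$ is a nice addition that the paper leaves implicit.
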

\proof
Firstly we write the derivative
\begin{align}
&\frac{d}{dt}\left[\int_{[-\pi,\pi)\times \R} (\Theta(t,\vartheta,\omega) -\omega t)\omega f_0(\vartheta, \omega)d\vartheta d\omega\right]=\\
&=-KR(t) \int_{\Cal T \times \R}\omega \sin(\vartheta-\varphi(t)) f(t,\vartheta, \omega)d\vartheta d\omega\nonumber.
\end{align}
Now we write the derivative of $\frac{R^2(t)}{2}$
\begin{align}\label{tre}
\frac{d}{dt}\frac{R^2(t)}{2}=&\left[\int_{\Cal T}\sin^2(\eta-\varphi(t))\rho(t,\eta)d\eta\right]KR^2(t)+\\
&-\left[\int_{\Cal T\times \R}\omega\sin(\eta-\varphi(t))f(t,\eta,\omega)d\eta d\omega\right]R(t)\nonumber
\end{align}
Integrating the last identity between 0 and t we get
\begin{align}\label{quat}
&\frac{R^2(t)}{2}-\frac{R^2(0)}{2}=\int_0^t\left[\int_{\Cal T}\sin^2(\eta-\varphi(s))\rho(s,\eta)d\eta\right]KR^2(s)ds+\\
&-\int_0^t\left[\int_{\Cal T\times \R} \omega\sin(\eta-\varphi(s))f(s,\eta,\omega)d\eta d\omega\right]R(s)ds\nonumber
\end{align}
using both (\ref{tre}) and (\ref{quat}) we get
 \begin{align}
& \int_{[-\pi,\pi)\times \R} (\Theta(t,\vartheta,\omega) -\omega t)\omega f_0(\vartheta, \omega)d\vartheta d\omega= \int_{[-\pi,\pi)\times \R} \vartheta\omega f_0(\vartheta, \omega)d\vartheta d\omega+\\
& +\frac{R^2(t)}{2}-\frac{R^2(0)}{2}-\int_0^t\left[\int_{\Cal T}\sin^2(\eta-\varphi(s))\rho(s,\eta)d\eta\right]KR^2(s)ds\nonumber
  \end{align}
we conclude because $R^2(t)$ is integrable and $R(t) \xrightarrow[t \rightarrow \infty]{}0$ (for lemma \ref{lemma1}).
\endproof
\begin{proposition}
If the entropy does not diverge, 
the only possible limit points of $f(t,\vartheta,\omega)$ are incoherent states 
\end{proposition}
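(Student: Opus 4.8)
The plan is to reduce the statement to the fact—already established in the proof of the previous proposition—that a non-diverging entropy forces $R(t)\to 0$, and then to read the conclusion off directly from the definition of the order parameter. Recall that $\dfrac{d}{dt}\int_{\Cal T\times\R} f\ln f\,\de\vartheta\,\de\omega=KR^2(t)$, so if the entropy does not diverge then $\int_0^{\infty}R^2(s)\,\de s<+\infty$; since $|\dot R(t)|\le KR(t)+\int_{\R}|\omega|g(\omega)\,\de\omega$ is bounded (the integrability of $\omega$ against $g$ being guaranteed by the normalization $\langle\omega\rangle=0$), the function $R^2$ has bounded derivative, and Lemma \ref{lemma1} applied to $R^2$ gives $R(t)\to 0$ as $t\to+\infty$.

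First I would observe that the family $\{f(t,\cdot,\cdot)\}_{t\ge0}$ is tight, hence relatively compact for the weak-$*$ convergence: indeed its $\omega$-marginal is the fixed probability density $g$ and the phase lives on the compact torus $\Cal T$. Thus every sequence $t_n\to+\infty$ admits a subsequence along which $f(t_n,\cdot,\cdot)$ converges weakly-$*$ to a probability measure $f^*$ on $\Cal T\times\R$, and these $f^*$ are exactly the limit points to be characterised. Testing the convergence against the bounded continuous observable $h(\vartheta,\omega)=\e^{\imm\vartheta}$ and using the second line of \eqref{weakne},
\begin{align*}
\int_{\Cal T\times\R}\e^{\imm\vartheta}f^*(\vartheta,\omega)\,\de\vartheta\,\de\omega
&=\lim_{n\to\infty}\int_{\Cal T\times\R}\e^{\imm\vartheta}f(t_n,\vartheta,\omega)\,\de\vartheta\,\de\omega\\
&=\lim_{n\to\infty}R(t_n)\,\e^{\imm\varphi(t_n)}=0,
\end{align*}
because $R(t_n)\to0$. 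Hence the order parameter of every limit point vanishes, i.e.\ $f^*$ is an incoherent state, which is the assertion.

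I do not expect any serious obstacle: the argument is short once $R(t)\to 0$ is available. The only points that deserve a line of justification are the boundedness of $\dot R$ that licenses the use of Lemma \ref{lemma1}, and the tightness of $\{f(t,\cdot,\cdot)\}$; both are immediate, the latter because the $\omega$-marginal $g$ is preserved by the flow.
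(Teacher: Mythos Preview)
Your argument correctly recovers $R(t)\to 0$ and hence shows that every weak-$*$ limit point $f^*$ satisfies $\int_{\Cal T\times\R}\e^{\imm\vartheta}f^*(\vartheta,\omega)\,\de\vartheta\,\de\omega=0$. The gap is in the last sentence: you equate ``the order parameter of $f^*$ vanishes'' with ``$f^*$ is an incoherent state'', but in this paper an incoherent state is a state from which the evolution is the free flow, i.e.\ one whose first Fourier coefficient in $\vartheta$ vanishes \emph{for every} $\omega$ (see the remark immediately after the proposition). The single scalar condition $R=0$ is strictly weaker: one can easily construct densities with $\int_{\Cal T}\e^{\imm\vartheta}f^*(\vartheta,\omega_1)\,\de\vartheta=-\int_{\Cal T}\e^{\imm\vartheta}f^*(\vartheta,\omega_2)\,\de\vartheta\neq 0$, for which the order parameter is zero at time $0$ but becomes positive under free rotation. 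Your proof does not rule these out as limit points.

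The paper closes this gap by proving, by induction on $k$, that $\int_{\Cal T\times\R}\e^{\imm\vartheta}\omega^{k}f(t,\vartheta,\omega)\,\de\vartheta\,\de\omega\to 0$ for every $k\in\N$ (differentiating in $t$ and reapplying Lemma~\ref{lemma1}); this yields $\int_{\Cal T\times\R}\e^{\imm\vartheta}\omega^{k}\bar f(\vartheta,\omega)\,\de\vartheta\,\de\omega=0$ for every limit point $\bar f$, and expanding $\e^{\imm\omega t}$ then gives $R(t)\equiv 0$ along the free flow from $\bar f$. If you want to salvage your shorter route, you would need an additional argument---for instance, invariance of the $\omega$-limit set under the flow together with continuity of the flow in the weak-$*$ topology---to upgrade ``$R=0$ at every limit point'' to ``$R(t)\equiv 0$ along the orbit of every limit point''. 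As written, that step is missing.
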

\begin{proof} \ \\
1st step: if the entropy does not diverge, then 
$$\int_{\Cal T\times \R} e^{\mathbf{i}\vartheta}\omega^k f(t,\vartheta,\omega)d\vartheta d\omega \to 0,\forall k\in \mathbb{N}.$$
The proof is done by induction, the first step is the fact that if the entropy does not diverge, then $R(t)$ vanishes by Lemma \ref{lemma1}.
Now we do the inductive step
\begin{equation}
\int_{\Cal T \times \R} e^{\mathbf{i}\vartheta}\omega^k f(t,\vartheta,\omega)d\vartheta d\omega \to 0 \Rightarrow \int_{\Cal T \times \R} e^{\mathbf{i}\vartheta}\omega^{k+1} f(t,\vartheta,\omega)d\vartheta d\omega \to 0.
\end{equation}
We write the derivative of the l.h.s.
\begin{align}
&\frac{d}{dt}\left[\int_{\Cal T \times \R} e^{\mathbf{i}\vartheta}\omega^k f(t,\vartheta,\omega)d\vartheta d\omega\right]=\\
&=\int_{\Cal T \times \R} \mathbf{i}e^{\mathbf{i}\vartheta}\omega^k\left[\omega-KR(t)\sin(\vartheta-\varphi(t)) \right] f(t,\vartheta,\omega)d\vartheta d\omega.
\nonumber
\end{align}
This quantity satisfies the hypothesis of the Lemma \ref{lemma1}, so it goes to zero, which implies 
\begin{equation}
 \int_{\Cal T \times \R} e^{\mathbf{i}\vartheta}\omega^{k+1} f(t,\vartheta,\omega)d\vartheta d\omega \to 0.
\end{equation}
2nd step: the limit points are incoherent states.

Let's call $\bar {f}(\vartheta,\omega)$ 
a limit point of $f(t,\vartheta,\omega)$, then we have

\begin{equation}\label{inc}
 \int e^{\mathbf{i}\vartheta}\omega^{k} 
{\bar f}(\vartheta,\omega)d\vartheta d\omega = 0,\quad \forall k\in \mathbb{N}.
\end{equation}
The solution of the Equation \eqref{noneq} with  $\bar f$ as initial datum 
is ${\bar f}(\vartheta -\omega t, \omega)$, 
in fact $R(t)$ generated by this density is zero:
\begin{align}
R(t)&=\int_{\Cal T \times \R} e^{\mathbf{i}\vartheta} {\bar f}(\vartheta-\omega t,\omega)d\vartheta d\omega = \int_{\Cal T \times \R} e^{\mathbf{i}(\eta+\omega t)} {\bar f}(\eta,\omega)d\eta d\omega=\\
&=\int_{\Cal T \times \R} e^{\mathbf{i}\eta} e^{\mathbf{i}\omega t} {\bar f}(\eta,\omega)d\eta d\omega=\int_{\Cal T \times \R} e^{\mathbf{i}\eta}\sum_{k=0}^\infty \frac{(\mathbf{i}\omega)^k}{k!}{\bar f}(\eta,\omega)d\eta d\omega=\\\nonumber
&=\sum_{k=0}^\infty\frac{\mathbf{i}^k}{k!}\int_{\Cal T \times \R} e^{\mathbf{i}\eta} \omega^k{\bar f}(\eta,\omega)d\eta d\omega=0.\nonumber
\end{align}
\end{proof}
\noindent 
Note that a density $\bar f(\vartheta,\omega)$ is an incoherent state
iff its first Fourier coefficient in $\vartheta$ is zero for any $\omega$.
\bibliographystyle{plain}
%
\bibliography{kuramoto2014}
\end{document}